\DeclareRobustCommand{\bigO}{%
  \text{\usefont{OMS}{cmsy}{m}{n}O}%
}
\title{On a Gradient Approach to Chebyshev Center Problems with Applications to Function Learning}
\author{\name Abhinav Raghuvanshi \email 200040008@iitb.ac.in \\
      \addr Indian Institute of Technology, Bombay
      \AND
      \name Mayank Baranwal \email mbaranwal@iitb.ac.in \\
      \addr Tata Consultancy Services Research, Mumbai\\
      \addr Indian Institute of Technology, Bombay
      \AND
      \name Debasish Chatterjee \email dchatter@iitb.ac.in\\
      \addr Indian Institute of Technology, Bombay\\
     }
\begin{document}
\maketitle

\begin{abstract}
  We introduce $\gradOL{}$, the first gradient-based optimization framework for solving Chebyshev center problems, a fundamental challenge in optimal function learning and geometric optimization. $\gradOL{}$ hinges on reformulating the semi-infinite problem as a finitary max-min optimization, making it amenable to gradient-based techniques. By leveraging automatic differentiation for precise numerical gradient computation, $\gradOL{}$ ensures numerical stability and scalability, making it suitable for large-scale settings. Under strong convexity of the ambient norm, $\gradOL{}$ provably recovers optimal Chebyshev centers while directly computing the associated radius. This addresses a key bottleneck in constructing stable optimal interpolants. Empirically, $\gradOL{}$ achieves significant improvements in accuracy and efficiency on 34 benchmark Chebyshev center problems from a benchmark \textsf{CSIP} library. Moreover, we extend $\gradOL{}$ to general convex semi-infinite programming (CSIP), attaining up to $4000\times$ speedups over the state-of-the-art \sipampl{} solver tested on the indicated \textsf{CSIP} library containing 67 benchmark problems. Furthermore, we provide the first theoretical foundation for applying gradient-based methods to Chebyshev center problems, bridging rigorous analysis with practical algorithms. $\gradOL{}$ thus offers a unified solution framework for Chebyshev centers and broader CSIPs.
\end{abstract}

%% intro.tex
%% Introduction

\section{Introduction and Problem Formulation}
\label{s:intro}

Optimal interpolation and learning have been long-standing open problems in signal processing and the theory of function learning. The problem was posed in \cite{ref:MicRiv-77} in the context of signal processing, and while tractable solutions were sought by a multitude of researchers over the subsequent decades, satisfactory solutions continued to remain elusive. In the language of approximation theory, the optimal interpolation (learning) corresponds to the so-called \embf{Chebyshev center} problem, which involves constructing a single function from a given class that best justifies a given data set. Beyond function learning, the Chebyshev center problem has found extensive applications in diverse fields, such as robust optimization for managing uncertainty~\cite{ref:Ben1998robust}, sensor network localization for estimating node positions~\cite{ref:Doherty2001convex}, and facility location problems to optimize placement strategies~\cite{ref:Drezner2004facility}. Moreover, its utility extends to tasks like data fitting in classification~\cite{ref:Tax1999support}, control system design for ensuring stability under bounded uncertainties~\cite{ref:Khalil1996robust}, and parameter estimation in error-bounded scenarios~\cite{ref:Boyd2004convex}, underscoring its fundamental importance in both theoretical and practical domains.

Let us briefly recall the optimal interpolation problem in the context of function learning: At its core, on a Banach space \((\Banach, \norm{\cdot})\), one is given a closed and bounded subset \(\admuncer\subset\Banach\), and charged with the task of determining a ball of smallest radius containing \(\admuncer\). The center \(\zeta\) of such a ball is a \embf{Chebyshev center} of \(\admuncer\). In other words, a Chebyshev center of \(\admuncer\) is an optimizer of the variational problem
\begin{equation}
\label{eq:Chebyshev center problem}
\begin{aligned}
    & \minimize \limits_{f \in \Banach} && \sup \limits_{g \in \admuncer}\  \norm{f - g}.
\end{aligned}
\end{equation}
Depending on certain properties (e.g., strict convexity) of the norm \(\norm{\cdot}\), a set \(\admuncer\) may not have a single Chebyshev center. The optimal value of \eqref{eq:Chebyshev center problem} is the \embf{Chebyshev radius} \(\chebRadius{\relSet}\) of \(\relSet\). %The illustrative diagram given in Figure \ref{fig:cheby} shows the Chebyshev center \(\zeta\) of a subset \(\relSet\) of \(\R[2]\) relative to the Euclidean norm.
% \begin{figure}%[H]
%     \centering
%     % \setlength\figureheight{=0.3\textwidth}
%     % \setlength\figurewidth{=0.2\textwidth}
% 		\includegraphics[scale=1]{../Figures/cheby}
% 	\caption{An example of the Chebyshev center and Chebyshev radius of a subset $K$ of $\R[2]$.}
% 	\label{fig:cheby}
% \end{figure}
We refer the reader to the review \cite{ref:AliTsa-19} and the authoritative text \cite[Chapter 16]{ref:AliTsa-21} for background on the Chebyshev center problem, and the recent work \cite{ref:BinBonDeVPet-24} for an historical account of optimal interpolation interpreted from the viewpoint of the Chebyshev center. While the problem is posed above in its most general, infinite-dimensional form, any computational method must necessarily operate on a finite-dimensional version \cite[Section 16.1]{ref:AliTsa-21}. This paper is dedicated to solving this important finite-dimensional problem, for which we present an algorithm to find a solution that is exact up to the precision of the computing machine.

\subsection*{Function learning and the Chebyshev center}\vspace{-.5em}
In the context of function learning and interpolation theory, the Chebyshev center problem encodes the idea of \emph{optimal learning} in a hypothesized model class: Here \(\Banach\) stands for the space of functions whose subsets are the hypothesis classes, and the set \(\relSet\) represents the subset of the model class of functions satisfying a given data set. The corresponding optimization problem \eqref{eq:Chebyshev center problem} faces difficult numerical challenges because first, for each fixed \(f \in \Banach\), the inner maximization over \(g\) in \eqref{eq:Chebyshev center problem} is on a potentially infinite-dimensional subset of the Banach space \(\Banach\), and its solutions  cannot be parametrically expressed in closed form in \(f\), and second, the outer minimization over \(f\) in \eqref{eq:Chebyshev center problem} is also over an infinite-dimensional Banach space \(\Banach\) in general. Consequently, \eqref{eq:Chebyshev center problem} is numerically intractable. To ensure computational tractability, one ``discretizes'' the various infinite-dimensional objects in \eqref{eq:Chebyshev center problem} above, and works in a finite dimensional setting;\footnote{\label{fn:finitary}For computational tractability, as pointed out in \cite[Section 16.1]{ref:AliTsa-21}, it is imperative to restrict attention to finitary objects; consequently, considering finite-dimensional avatars of the various objects in \eqref{eq:Chebyshev center problem} is the best that can be done.} the resulting mathematical optimization is an instance of the so-called \emph{relative Chebyshev center problem}: A \embf{relative Chebyshev center} of a closed and bounded subset \(\relSet\) with respect to a nonempty \(\Set \subset \Banach\) is given by an optimizer of
\begin{equation}
\label{eq:relative Chebyshev center}
\begin{aligned}
    & \minimize \limits_{f \in \Set} && \sup \limits_{g \in \relSet}\  \norm{f - g},
\end{aligned}
\end{equation}
where the set \(\Set\) is chosen to be a reasonably fine \emph{finite} discretization approximating the Banach space \(\Banach\) and the model class \(\relSet\) is restricted to a suitable finite dimensional set of functions (nearly) satisfying the given data. Consequently, it addresses the optimal function learning problem via optimal interpolation from finite datasets. Nevertheless, even the resulting simplified problem \eqref{eq:relative Chebyshev center} continues to be numerically challenging since even in finite-dimensions, the complexity increases exponentially in the state dimension \cite[Chapter 15, p. 362]{ref:AliTsa-21}. Indeed, the (finite-dimensional) Chebyshev center problem is known to be \embf{NP-hard} \cite[Chapter 15]{ref:AliTsa-21} in general.

\subsection*{Chebyshev center via convex semi-infinite programs}\vspace{-.5em}
Some NP-hard problems may permit numerically viable and robust approximations, and it turns out that such is the case with the (finite dimensional) Chebyshev center problem. \emph{At its core}, given a nonempty and compact set \(\admuncer\subset\R[\dimsp]\), the centerpiece of a numerical algorithm must operate to solve the finite-dimensional variational problem
\begin{equation}
	\label{e:finite cheby}
	\chebRadius{\admuncer} = \inf_{\decvar\in\admdec} \sup_{\uncervar\in\admuncer} \norm{\uncervar - \decvar},
\end{equation}
where \(\admdec\) is either \(\R[\dimsp]\) or a nonempty convex subset thereof. Function learning from finite data can be framed in the language of \eqref{e:finite cheby} (see~\cite{ref:ParCha-23}). Note that \eqref{e:finite cheby} is \emph{always} feasible, and if the set \(\admuncer\) contains more than two distinct points, then the Chebyshev radius of \(\admuncer\) is non-zero. Introducing a slack variable \(\slack\!\in\!\R[]\), we note that the value \(\chebRadius{\admuncer}\) of \eqref{e:finite cheby} is precisely equal to the value of
\begin{equation}
	\label{e:csip cheby}
	\begin{aligned}
		& \minimize_{(\slack, \decvar)} && \slack\\
		& \sbjto && 
		\begin{cases}
			\norm{\uncervar - \decvar} - \slack \le 0 & \text{for all }\uncervar\in\admuncer,\\
			(\slack, \decvar)\in\R[]\times\admdec,
		\end{cases}
	\end{aligned}
\end{equation}
which is a convex semi-infinite program (\csip{}) --- a finite-dimensional convex optimization problem with a compact (and infinite) family of convex inequality constraints. It follows that solving \eqref{e:csip cheby} gives a solution to \eqref{e:finite cheby} in the sense that if \((\slack\opt, \decvar\opt)\) solves \eqref{e:csip cheby}, then \(\chebRadius{\admuncer} = \slack\opt\) and \(\decvar\opt\) is a Chebyshev center of \(\admuncer\). The recent work \cite{ref:DasAraCheCha-22} established a viable approach to obtaining exact solutions to \csip{}s, and \cite{ref:ParCha-23} developed the connection between optimal function learning and \csip{}s to provide a numerically viable solution to the Chebyshev center problem. While these two works established general principles and high-level algorithms, robust numerical algorithms for solving the Chebyshev center problem were outside their scope. This article is devoted to the development of \embf{a gradient based algorithm for solving the Chebyshev center problem}, which is enabled by reformulating the underlying semi-infinite program into an entirely finitary max-min structure, which in turn \embf{serves as a numerically viable algorithm for optimal function learning}

% This article is devoted to the development of \embf{a gradient based algorithm for solving the Chebyshev center problem}, which in turn, \embf{serves as a numerically viable algorithm for optimal function learning}.%\vspace{-.5em}

\subsection*{Why Are Chebyshev Center Problems Practically Important? - Illustrative Examples}\vspace{-.5em}
Chebyshev center problems, at their core, capture the principle of constructing an ``optimally central'' object within a set $\admuncer$, which translates directly into optimal function learning when the set represents candidate hypotheses consistent with observed data. Beyond their foundational role in approximation theory, several concrete examples illustrate their broad impact:\\
\noindent \textbf{Robust Control Systems}: In designing a controller for a physical system (e.g., an aircraft or a robot), the exact parameters of the system are often uncertain due to manufacturing tolerances or environmental changes. The set $\admuncer$ can represent the set of all possible system responses. The Chebyshev center corresponds to a single controller design that guarantees the best possible performance (e.g., stability) in the worst-case scenario, across all possible system variations in $\admuncer$.\\
\noindent \textbf{Sensor Network Localization}: Consider estimating the position of a sensor node based on signals from several fixed beacons. Each signal constrains the node's location to be within a certain region. The intersection of these regions forms the set $\admuncer$ of all possible locations. The Chebyshev center of $\admuncer$ provides the optimal location estimate, minimizing the maximum possible error between the estimate and true location.\\
\noindent \textbf{Financial Portfolio Optimization}: An investor might model a set $\admuncer$ of plausible future market scenarios, each represented by a vector of asset returns. The goal is to construct a single investment portfolio (the center) that minimizes the maximum regret across all scenarios in $\admuncer$. This approach creates a portfolio optimally hedged against worst-case market outcome.

\subsection*{Contributions}\vspace{-.5em}
Chebyshev center problems and more broadly, CSIPs, are notoriously challenging to solve, even in moderately high-dimensional settings. The core difficulty lies in their semi-infinite nature: the feasible region is defined by infinitely many constraints, rendering even approximate solutions computationally demanding. In response, prior work has often focused on tractable relaxations, such as the \emph{relaxed Chebyshev center (RCC)} problem~\cite{eldar2007bounded, xia2021chebyshev}, which approximates the original formulation.

Consequently, developing a scalable, gradient-based solver for the \textbf{original (non-relaxed) Chebyshev center problem has remained a longstanding open challenge} in optimization and function learning. Despite the importance of the problem, there has been little progress on algorithms that can directly tackle the original formulation using modern optimization toolkits. This paper introduces \gradOL, \textbf{the first framework to successfully address this gap}. To the best of our knowledge, no existing methods have successfully exploited gradient-based optimization to directly solve the Chebyshev center problem, a task for which we leverage modern automatic differentiation techniques. Relying on the fundamental insights of \cite{ref:Bor-81}, the key results of \cite{ref:ParCha-23} can be leveraged to establish that the value of \eqref{e:csip cheby} is precisely equal to the value of
		\begin{equation}
			\label{e:PC23}
			\sup_{(\uncervar_1, \ldots, \uncervar_{\dimsp+1})\in\admuncer^{\dimsp+1}} \inf_{(\slack, \decvar)\in\R[]\times\admdec} \aset[\Big]{ \slack\suchthat \norm{\uncervar_i -\decvar} \le \slack,\;i = 1, \ldots, \dimsp + 1 }.
		\end{equation}
		Observe that \eqref{e:PC23} is entirely \emph{finitary}: The inner minimization is a standard convex optimization problem with \(\dimsp+1\) constraints, while the outer maximization is global and over \(\dimsp+1\) many variables, each of which is \(\dimsp\)-dimensional. Since there is no realistic possibility of obtaining an expression of the map
        \begin{align*}
            \admuncer^{\dimsp+1} \ni (\uncervar_1, \ldots, \uncervar_{\dimsp+1})\mapsto \innerG(\uncervar_1, \ldots, \uncervar_{\dimsp+1})
        \end{align*}
        where, $\innerG(\uncervar_1, \ldots, \uncervar_{\dimsp+1})$ is defined as:
        \begin{align*}
            \inf_{(\slack, \decvar)\in\R[]\times\admdec} \Bigl\{ \slack \,\Big|\, \norm{\uncervar_i - \decvar} \le \slack\text{ for }i = 1, \ldots, \dimsp+1 \Bigr\} \in\R[],
        \end{align*}
	but it can be evaluated at will by employing standard convex optimization solvers, zeroth-order numerical methods are natural candidates for solving \eqref{e:PC23}. However, the availability of autodiff libraries, e.g., \textsf{pytorch} and \textsf{zygote}, raises the pertinent question of whether employing first-order (sub-)gradient methods (leveraging automatic differentiation) to solve the outer maximization in \eqref{e:PC23} would be feasible. Below we summarize our primary contributions:%\vspace{-.5em}

%As stated earlier, the Chebyshev center problem, which involves finding the center and the radius of the minimal ball circumscribing a given set, is inherently linked to CSIPs. In mathematical terms, the Chebyshev center problem can be viewed as a specific instance of a CSIP, where the objective is to minimize a slack variable while satisfying infinitely many convex constraints. %This relationship allows for the application of general CSIP solution techniques to the Chebyshev center problems, bridging the gap between these two domains of optimization.
%Leveraging this connection, our work contributes both to solving Chebyshev center problems efficiently and to the broader landscape of CSIP solvers:
\begin{enumerate}[label=\arabic*.,align=left, widest=3, leftmargin=*]
    \item \textbf{Efficient solver for Chebyshev center problems}: We present a robust and numerically efficient implementation of a gradient-based optimization technique for the Chebyshev center problem. Our complete \textsf{Julia} package, \gradOL{}, employs the latest automatic differentiation libraries to compute (sub-)gradients, ensuring high accuracy and scalability. Specifically:\vspace{-.5em}
    \begin{itemize}[leftmargin=*]
        \item The optimal value obtained via our solver corresponds directly to the Chebyshev radius.
        \item When the ambient norm on \(\R[\dimsp]\) is strongly convex, our algorithm also produces optimizers that attain the Chebyshev radius through the formulation in \eqref{e:PC23}. This is particularly valuable in optimal function learning, where the construction of numerically viable optimal interpolants  has remained a notable challenge over decades.
    \end{itemize}\vspace{-.5em}
    \item \textbf{Theoretical foundation for gradient-based learning}: The inner minimization in~\eqref{e:PC23} is subject to constraints, raising two key challenges: (a) whether the gradient of the inner objective, as required by the outer maximization step, is well-defined, and (b) how to compute this gradient efficiently when it exists. While our proposed \gradOL{} algorithm directly addresses the computational aspect in (b) through automatic differentiation, we also provide a rigorous theoretical foundation that justifies the use of gradient-based learning in the first place. Specifically, we establish that the map $\uncervar \mapsto \innerG(\uncervar)$ is locally Lipschitz, thereby ensuring the existence of generalized gradients.\vspace{-.5em}
    \item \textbf{Benchmark testing and performance}: The \gradOL{} package has been tested extensively on a curated collection of 34 benchmark Chebyshev center problems from the \textsf{CSIP} library \cite{vaz2001csip}. As reported in the subsequent sections, \gradOL{} demonstrates consistent improvements over existing techniques, providing very accurate and \embf{vastly more efficient} solutions in all cases.\vspace{-.5em}
    \item \textbf{The case of general CSIPs}: Since the Chebyshev center problem is a special case of CSIPs, our gradient-based approach naturally extends to solving general CSIPs. We applied \gradOL{} to an extended library of 33 additional benchmark CSIPs. Notably, \gradOL{} shows \embf{several orders of magnitude improvement in speed} in almost all cases, and the values reported by \gradOL{} are no worse than the best reported values in all but two cases.%\footnote{These two problems are not Chebyshev center problems.}

    %A pertinent point here must be mentioned: \gradOL{} employs the maxmin problem in \cite[Theorem 1]{ref:DasAraCheCha-22}. It is possible to construct \csip{}s whose reformulation \`a la \cite[Theorem 1]{ref:DasAraCheCha-22} are not amenable to (sub-)gradient techniques; we provide one such example in \secref{sec:caveat}. Consequently, solutions to general \csip{}s via \gradOL{} may not always lead to the correct solutions, and indeed, in the extended benchmark library of 180 benchmark \csip{}s (that include the aforementioned 34 Chebyshev center problems), our technique faced difficulties in a few, and we discuss in \secref{sec:caveat} the reasons for such difficulties. It is important to note, however, that the majority of the remaining benchmark problems are non-convex SIPs, which fall outside the scope of this study. Additionally, among the 180 problems, the 67 benchmarked instances are the ones for which either analytical solutions or the best reported solutions are available. As a result, we have limited our experiments to these 67 instances.
\end{enumerate}

\section{Preliminaries}\label{sec:prelim}

For Chebyshev center problems, both geometric and optimization-based methods have been proposed. In the case of certain convex sets, the Chebyshev center problem can be formulated as a linear program (LP) when the feasible region is defined by linear inequalities, making it computationally efficient to solve. Recent developments have focused on numerical algorithms that combine targeted sampling techniques and convex semi-infinite optimization to compute the Chebyshev radius and center more efficiently. These methods are particularly relevant in optimal learning scenarios, especially when working with compact hypothesis spaces within Banach spaces \citep{ref:ParCha-23}.\vspace{-.5em}

\subsection*{A key result from the theory of \csip{}s}\vspace{-.5em}
A \csip{} is a finite-dimensional convex optimization problem with infinitely many constraints. Consider
\begin{equation}
	\label{e:csip}
	\begin{aligned}
		& \minimize_{\varcsip\in\admvarcsip} && \obj(\varcsip)\\
		& \sbjto &&
		\begin{cases}
			\constr(\varcsip, \uvarcsip) \le 0 \quad \text{for all }\uvarcsip\in\admuvarcsip,\\
            \admvarcsip\subset\R[\dimvarcsip], \admuvarcsip\subset\R[\dimuvarcsip],
		\end{cases}
	\end{aligned}
\end{equation}
along with the following data:
\begin{enumerate}[label=\textup{(\ref{e:csip}-\alph*)}, align=left, widest=b, leftmargin=*]
	\item \label{csip:sets} \(\domain\subset\R[\dimdec]\) is an open convex set, \(\admvarcsip\subset\domain\) is a closed and convex set with non-empty interior, and \(\admuvarcsip\) is a compact set,
	\item \(\obj:\domain\to\R[]\) is a continuous convex function,
	\item \(\constr:\domain\times\admuvarcsip\to\R[]\) is a continuous function such that \(\constr(\cdot, \uvarcsip)\) is convex for every \(\uvarcsip\in\admuvarcsip\), and
	\item \label{csip:interior} the admissible set \(\bigcap_{\uvarcsip\in\admuvarcsip} \aset[\big]{\varcsip\in\admvarcsip\suchthat \constr(\varcsip, \uvarcsip) \le 0}\) has non-empty interior.
\end{enumerate}

The following recent result \cite[Theorem 1, Proposition 2]{ref:DasAraCheCha-22} provides a numerically viable mechanism to solve \csip{}s with low memory requirements, and constitutes the backbone of the gradient technique established in this article.

\begin{theorem}
    \label{t:key}
	Consider the \csip{} \eqref{e:csip} along with its associated data \ref{csip:sets} -- \ref{csip:interior}. Define the function
	\begin{equation}
        \label{e:innerG}
		\admuvarcsip^{\dimvarcsip}\ni (\uvarcsip_1, \ldots, \uvarcsip_{\dimvarcsip}) \teL \V \mapsto \innerG(\V)  \Let \inf_{\varcsip\in\admvarcsip}\aset[\Big]{\obj(\varcsip) \suchthat \constr(\varcsip, \uvarcsip_i) \le 0 \;\text{for }i = 1, \ldots, \dimvarcsip}.
	\end{equation}
	Then the (optimal) value of the CSIP \eqref{e:csip} equals 
    \begin{equation}
        \label{e:maxmin}
        \sup_{(\uvarcsip_1, \ldots, \uvarcsip_\dimvarcsip)\in\admuvarcsip^\dimvarcsip} \innerG(\uvarcsip_1, \ldots, \uvarcsip_\dimvarcsip).
    \end{equation}
    Moreover, if\ \eqref{e:csip} admits a unique solution and if \((\uvarcsip_1\varopt, \ldots, \uvarcsip_\dimvarcsip\varopt)\) maximizes \(\innerG\), then an optimizer of 
	\[
		\inf_{\varcsip\in\admvarcsip}\aset[\Big]{\obj(\varcsip) \suchthat \constr(\varcsip, \uvarcsip_i\varopt) \le 0 \text{ for }i = 1, \ldots, \dimvarcsip}
	\]
	also optimizes \eqref{e:csip}.
\end{theorem}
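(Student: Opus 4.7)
Let $v^\star$ denote the value of \eqref{e:csip}. The plan is to prove the value equality $v^\star = \sup_V \innerG(V)$ by two matching inequalities and then to deduce the uniqueness corollary by a separate convexity argument. For the easy direction $v^\star \ge \sup_V \innerG(V)$, I would observe that for every $V = (\uvarcsip_1, \ldots, \uvarcsip_\dimvarcsip) \in \admuvarcsip^\dimvarcsip$ the program defining $\innerG(V)$ retains only $\dimvarcsip$ of the original constraints and is therefore a relaxation of \eqref{e:csip}: every feasible point of \eqref{e:csip} is feasible there, so $\innerG(V) \le v^\star$, and taking the supremum over $V$ preserves this bound. This step uses none of the hypotheses \ref{csip:sets}--\ref{csip:interior}.

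The reverse inequality $v^\star \le \sup_V \innerG(V)$ is the heart of the theorem and the main obstacle. Here I would exploit the Slater-type condition \ref{csip:interior} to invoke strong duality for the CSIP with respect to a nonnegative Borel measure $\mu$ on the compact set $\admuvarcsip$, writing
\[
v^\star \;=\; \sup_{\mu \ge 0} \inf_{\varcsip \in \admvarcsip}\Bigl\{\obj(\varcsip) + \int_{\admuvarcsip}\constr(\varcsip, \uvarcsip)\,\mu(\mathrm{d}\uvarcsip)\Bigr\},
\]
with the outer supremum attained by some $\mu^\star$ and the inner infimum by a primal optimizer $\varcsip^\star$, satisfying the subgradient inclusion $0 \in \partial\obj(\varcsip^\star) + \int \partial_\varcsip\constr(\varcsip^\star, \uvarcsip)\,\mu^\star(\mathrm{d}\uvarcsip)$ together with complementary slackness on $\operatorname{supp}(\mu^\star)$. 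The decisive step, which fully uses that $\varcsip$ ranges over $\R^{\dimvarcsip}$, is a conical Carathéodory reduction: the inclusion above realizes a specific element of $\R^{\dimvarcsip}$ as a nonnegative combination of vectors drawn from $\{\partial_\varcsip\constr(\varcsip^\star,\uvarcsip) : \uvarcsip \in \operatorname{supp}(\mu^\star)\}$, hence may be re-realized using at most $\dimvarcsip$ atoms $\uvarcsip_1^\star, \ldots, \uvarcsip_\dimvarcsip^\star$ drawn from that support. The restricted program $\innerG(\uvarcsip_1^\star, \ldots, \uvarcsip_\dimvarcsip^\star)$ then inherits exactly the same KKT certificate at $\varcsip^\star$, so convexity forces $\varcsip^\star$ to remain optimal for it, giving $\innerG(\uvarcsip_1^\star, \ldots, \uvarcsip_\dimvarcsip^\star) = \obj(\varcsip^\star) = v^\star$ and hence both the matching inequality and the attainment of the supremum in \eqref{e:maxmin}.

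For the uniqueness statement, let $\varcsip_0$ be the unique optimizer of \eqref{e:csip} and let $\varcsip^\dagger$ be any optimizer of the restricted program at a maximizer $V^\star = (\uvarcsip_1^\star, \ldots, \uvarcsip_\dimvarcsip^\star)$ of $\innerG$. The value equality already forces $\obj(\varcsip^\dagger) = v^\star$; if I can show that $\varcsip^\dagger$ is feasible for the full \eqref{e:csip}, uniqueness gives $\varcsip^\dagger = \varcsip_0$ immediately. To this end I would interpolate $\varcsip_t = (1-t)\varcsip_0 + t\varcsip^\dagger$, note that every $\varcsip_t$ still solves the restricted program (by convexity of $\obj$, $\admvarcsip$, and each $\constr(\cdot,\uvarcsip)$), and examine the largest $t^\star \in [0,1]$ for which $\varcsip_t$ is feasible for \eqref{e:csip}. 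Closedness of the full feasible set together with $\obj(\varcsip_{t^\star}) \le v^\star$ forces $\varcsip_{t^\star}$ to be a full optimizer, hence $\varcsip_{t^\star} = \varcsip_0$; a compactness argument on $\admuvarcsip$ applied to the violating indices $\uvarcsip_t$ that would appear if $t^\star < 1$ yields a limiting constraint index that contradicts either the extremality of $V^\star$ or the uniqueness of $\varcsip_0$, leaving only $t^\star = 1$ and therefore $\varcsip^\dagger = \varcsip_0$.
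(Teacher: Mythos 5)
First, a point of reference: the paper does not actually prove Theorem~\ref{t:key} --- it is imported from \cite{ref:DasAraCheCha-22} --- so there is no in-paper argument to compare yours against, and I can only judge the proposal on its own terms. For the value equality, your two-inequality plan is right, and the hard direction via strong Lagrangian duality over nonnegative measures on \(\admuvarcsip\) followed by a conic Carath\'eodory reduction of the stationarity condition to at most \(\dimvarcsip\) atoms is exactly the mechanism underlying the cited result (the paper itself credits \cite{ref:Bor-81} for this insight). Three technical debts remain in your sketch: (i) you assume the primal infimum is attained at some \(\varcsip^\star\), but \(\admvarcsip\) is only closed, not compact, so attainment is not guaranteed and the argument must be run with \(\epsilon\)-optimizers or with a Helly-type argument on level sets; (ii) the stationarity inclusion must carry the normal cone \(N_{\admvarcsip}(\varcsip^\star)\) as an additional term (harmless for the Carath\'eodory count, but it has to be there); (iii) passing from the set-valued integral \(\int\partial_\varcsip\constr(\varcsip^\star,\cdot)\,d\mu^\star\) to a conic combination of finitely many subgradients needs a measurable-selection and closed-cone argument that you elide.

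The genuine gap is in your treatment of the last assertion. Read as ``\emph{every} optimizer of the reduced program at a maximizer \(V^\star\) of \(\innerG\) solves \eqref{e:csip}'', the claim you are trying to prove is false, so no repair of the homotopy argument will close it. Counterexample: \(\dimvarcsip=2\), \(\varcsip=(x,y)\), \(\admvarcsip=[-1,1]^2\), \(\obj=y\), \(\admuvarcsip=\{1,2,3\}\) with \(\constr(\varcsip,1)=-y\), \(\constr(\varcsip,2)=x-y\), \(\constr(\varcsip,3)=-x-y\). The full feasible set is \(\{y\ge\lvert x\rvert\}\cap[-1,1]^2\) with unique solution \((0,0)\) and value \(0\); the pair \(V^\star=(1,2)\) attains \(\sup\innerG=0\), yet \((-1,0)\) optimizes the reduced program at \(V^\star\) and violates the third constraint. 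Your closing step --- extracting a limiting violated index and claiming it ``contradicts either the extremality of \(V^\star\) or the uniqueness of \(\varcsip_0\)'' --- is precisely where this must fail, and in the example it does: the limiting index is \(3\), which is merely active at \((0,0)\) and contradicts nothing. The assertion is only safe in the existential reading, and that reading is a one-liner you did not need the homotopy for: the unique CSIP optimizer is feasible for the reduced program and attains its optimal value \(v^\star=\innerG(V^\star)\), hence is itself one of its optimizers. The universally quantified version that the extraction algorithm actually relies on requires the \emph{additional} hypothesis that the reduced program has a unique optimizer --- which is exactly what the paper supplies for the Chebyshev instance via strong convexity of the norm in Step~2 of the proof of Theorem~\ref{p:main result}.
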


Theorem \ref{t:key} is applicable the \csip{} \eqref{e:csip cheby} corresponding to the Chebyshev center problem \eqref{e:finite cheby}; indeed, one picks \(\dimvarcsip = \dimsp + 1\), \(\dimuvarcsip = \dimsp\), \(\domain = \R[]\times\R[\dimsp]\), \(\admuvarcsip = \admuncer\subset\R[\dimsp]\), \(\admvarcsip\subset\domain\) is some closed and convex set containing \(\lcro{0}{+\infty}\times\admuncer\), and the functions \(\domain\ni (\slack, \decvar) \mapsto \obj(\slack, \decvar) = \slack\) and \(\domain\times\admuncer\ni\bigl((\slack, \decvar), \uncervar\bigr)\mapsto \constr\bigl((\slack,\decvar), \uncervar\bigr) = \norm{\uncervar - \decvar} - \slack\) in \eqref{e:csip} to obtain \eqref{e:csip cheby}. It is well-known that if the underlying norm \(\norm{\cdot}\) in \eqref{e:csip cheby} is strictly convex, e.g., if it is the Euclidean norm, then the Chebyshev center is unique. Consequently, the CSIP \eqref{e:csip cheby} admits a unique solution, and in the light of Theorem \ref{t:key}, the value of \eqref{e:csip} equals that of \eqref{e:PC23} (as noted before \eqref{e:PC23}) and an optimizer of the Chebyshev center problem may be extracted as indicated in Theorem \ref{t:key}; of course, the corresponding (optimal) value is the Chebyshev radius.

In the case of the \csip{} \eqref{e:csip cheby}, the function \(\innerG\) becomes
\begin{equation}
    \label{e:innerG cheby}
    \admuncer^{\dimsp+1}\ni (\uncervar_1, \ldots, \uncervar_{\dimsp+1}) \teL \U \mapsto \innerG(\U) \Let \inf_{(\slack, \decvar)\in\R[]\times\admdec}\aset[\big]{\slack\suchthat\!\norm{\uncervar_i-\decvar} - \slack \le 0\text{ for }i = 1, \ldots, \dimsp+1}.
\end{equation}
Our main contribution --- the algorithm \gradOL{} --- leverages insights from Theorem~\ref{t:key} to develop a (sub-)gradient-based optimization algorithm, $\gradOL{}$, which employs automatic differentiation for numerical gradient estimation. \gradOL{} is designed to employ numerical gradients to maximize \(\innerG\). A key challenge in maximizing $(\uncervar_1, \ldots, \uncervar_{\dimsp+1})\mapsto \innerG(\uncervar_1, \ldots, \uncervar_{\dimsp+1})$ lies in the absence of closed-form analytical formulae for \(\innerG\) and therefore its gradients. The success of gradient algorithms for optimization hinges on the local Lipschitz property of the corresponding objective function, so it is natural to find conditions under which this property holds for \(\innerG\) in \eqref{e:innerG cheby}.

\begin{remark}[Convexity in the inner variable vs.\ outer maximization]
Although the original problem is convex, it is semi-infinite. Theorem~2.1 yields a finitary max--min reformulation with the same optimal value. For any fixed \(\U\), the inner minimization remains a convex program; in contrast, the outer maximization over \(\U\) is generally non-concave and can be non-smooth. This is the source of the stationarity-type guarantees discussed in Remark~3.2.
\end{remark}

\subsection*{Automatic Differentiation for Gradient Computation}
Our method leverages automatic differentiation (AD) \citep{baydin2018autodiff}, which computes exact gradients by propagating derivatives through the computational graph of the routine. The value function $\G$ in~\eqref{e:innerG} is defined implicitly through an inner optimization, and its gradient has no closed form. AD differentiates through the full sequence of operations that produce $\G$, yielding an exact $\nabla \G$ and enabling the \gradOL{} algorithm.

%Theorem \ref{t:key} asserts that solutions to \eqref{e:csip} may be obtained by maximizing the function \(\innerG\). Naturally, the selection of the numerical algorithms for maximizing \(\innerG\) should be made depending on the regularity of \(\innerG\). It is also a fact that, in general, an analytical expression of \(\innerG\) is not available since the values of \(\innerG\) are the optimal values of a convex optimization problem parametrized by the argument of \(\innerG\). \gradOL{} employs a gradient algorithm for maximization of \(\innerG\), and to that end, certain regularity of \(\innerG\) is necessary.
%\todo[inline]{The complete paragraph above may not be needed since the next one in Main results is a rehash.}
\let\AND\relax
%% mainres.tex
%% Main results
%\section{The Key Advancement}
\section{A Differentiable Max-Min Formulation and the \gradOL{} Algorithm}
\label{sec:Mainres}

% %The \gradOL{} algorithm is designed to maximize requires certain structural properties of the function \(\innerG(\cdot)\) defined in \eqref{e:innerG}, which 

%%%%%%

% In the case of the \csip{} \eqref{e:csip cheby}, the function \(\innerG\) becomes
% \begin{equation}
%     \label{e:innerG cheby}
%     \admuncer^{\dimsp+1}\ni (\uncervar_1, \ldots, \uncervar_{\dimsp+1}) \teL \U \mapsto \innerG(\U) \Let \inf_{(\slack, \decvar)\in\R[]\times\admdec}\aset[\big]{\slack\suchthat\!\norm{\uncervar_i-\decvar} - \slack \le 0\text{ for }i = 1, \ldots, \dimsp+1}.
% \end{equation}
% Our main contribution --- the algorithm \gradOL{} --- leverages insights from Theorem~\ref{t:key} to develop a (sub-)gradient-based optimization algorithm, $\gradOL{}$, which employs automatic differentiation for numerical gradient estimation. \gradOL{} is designed to employ numerical gradients to maximize \(\innerG\). A key challenge in maximizing $(\uncervar_1, \ldots, \uncervar_{\dimsp+1})\mapsto \innerG(\uncervar_1, \ldots, \uncervar_{\dimsp+1})$ lies in the absence of closed-form analytical formulae for \(\innerG\) and therefore its gradients. The success of gradient algorithms for optimization hinges on the local Lipschitz property of the corresponding objective function, so it is natural to find conditions under which this property holds for \(\innerG\) in \eqref{e:innerG cheby}.

Ensuring local Lipschitzness of the function \(\innerG\) for general \csip{}s with inequality constraints presents significant difficulties, but certain structures of Chebyshev center problems make them amenable to apply results from the shelf for proving local Lipschitzness of the corresponding \(\innerG\). Proposition~\ref{p:Lip solutions} in Appendix~\ref{s:Lip solutions} provides a set of sufficient conditions for \(\innerG(\cdot)\) to be continuous. %It says, informally in the context of \eqref{e:csip}, that under a set of assumptions concerning smoothness of the functions \(\obj\) and \(\constr\), regularity of the constraints at a point \((\varcsip\opt, \V\opt)\) and a strong second-order sufficient condition at \((\varcsip\opt, \V\opt)\), for small perturbations of \(\V\) around \(\V\opt\), the solution \(\innerG(\V)\) persists and is Lipschitz in \(\V\).
In order to satisfy the hypotheses of Proposition \ref{p:Lip solutions}, it is possible to rephrase the Chebyshev center problem \eqref{e:finite cheby} to the smooth variant
\begin{equation}
    \label{e:smooth cheby}
\inf_{\decvar\in\admdec} \sup_{\uncervar\in\admuncer} \norm{\uncervar - \decvar}^2,
\end{equation}
such that its optimal value is the square of the Chebyshev radius and its corresponding \csip{} \eqref{e:csip cheby} features the twice continuously differentiable objective function \(\obj(\slack, \decvar) \Let \slack\) and constraint function \(\constr\bigl((\slack, \decvar), \uncervar\bigr) = \norm{\uncervar - \decvar}^2 - \slack\).

Consider the \csip{} corresponding to \eqref{e:smooth cheby} given by
\begin{equation}
    \label{e:csip smooth cheby}
%    \begin{aligned}
        \inf_{(\slack, \decvar)\in\R[]\times\admdec} \aset[\big]{ \slack\suchthat
            \norm{\uncervar - \decvar}^2 - \slack \le 0\quad\text{for all }\uncervar\in\admuncer},
%    \end{aligned}
\end{equation}
for which we have the following result. Recall that \(\admuncer\) may be replaced by its closed convex hull, \(\cch(\admuncer)\), in the Chebyshev center problem; consequently, we shall assume in the sequel that \(\admuncer = \cch(\admuncer)\) without losing generality.

\begin{theorem}
	\label{p:main result}
	Consider the problem \eqref{e:csip smooth cheby} with \(\admuncer\subset\R[\dimsp]\) non-empty and compact,\footnote{Recall that by assumption $\admuncer$ is closed and convex.} and suppose that \(\admdec\) is a compact and convex set containing \(\admuncer\). Define the function
    \begin{equation}
        \label{e:innerG smooth}
        \admuncer^{\dimsp+1}\ni (\uncervar_1, \ldots, \uncervar_{\dimsp+1})\teL \U \mapsto \innerG(\U) \Let \inf_{(\slack, \decvar)\in\R[]\times\admdec}\aset[\Big]{\slack\suchthat \norm{\uncervar_i - \decvar}^2 \le \slack\text{ for }i = 1, \ldots, \dimsp+1}.
    \end{equation}
	Then the mapping \(\innerG\) is \(\R[]\)-valued, Lipschitz continuous, and the value of \eqref{e:smooth cheby} is precisely equal to \( \sup_{\U\in\admuncer^{\dimsp+1}} \innerG(\U) \).
    
    % \(\bigl( \sup_{\U\in\admuncer^{\dimsp+1}} \innerG(\U) \bigr)^{\frac{1}{2}}\).
\end{theorem}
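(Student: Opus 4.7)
The plan is to first simplify $\innerG$ by eliminating the slack variable, then establish the three claimed properties in order: real-valuedness, (global) Lipschitz continuity, and the value identity.

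First I would observe that the inner minimization decouples cleanly. For any fixed $\decvar \in \admdec$ and $\U \in \admuncer^{\dimsp+1}$, the set of feasible slacks is $[\max_i \norm{\uncervar_i - \decvar}^2, +\infty)$, so the inner minimum over $\slack$ is attained at $\max_i \norm{\uncervar_i - \decvar}^2$. Hence
\[
    \innerG(\U) = \inf_{\decvar \in \admdec}\, \max_{1 \le i \le \dimsp+1} \norm{\uncervar_i - \decvar}^2.
\]
Real-valuedness is then immediate: the map $\decvar \mapsto \max_i \norm{\uncervar_i - \decvar}^2$ is continuous and $\admdec$ is nonempty and compact, so the infimum is attained, finite, and in fact nonnegative and bounded above by $\max_{\decvar \in \admdec, \uncervar \in \admuncer} \norm{\uncervar - \decvar}^2 < \infty$.

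For the Lipschitz property, I would use the standard argument that an infimum of a family of functions that is uniformly Lipschitz in the parameter is itself Lipschitz with the same constant. Set $R \Let \max_{\decvar \in \admdec,\, \uncervar \in \admuncer} \norm{\uncervar - \decvar}$, which is finite by compactness of both sets. For each fixed $\decvar \in \admdec$ and any $\uncervar, \uncervar' \in \admuncer$, the reverse-triangle and polarization inequalities give
\[
    \bigl|\norm{\uncervar - \decvar}^2 - \norm{\uncervar' - \decvar}^2\bigr| \le \bigl(\norm{\uncervar - \decvar} + \norm{\uncervar' - \decvar}\bigr)\,\norm{\uncervar - \uncervar'} \le 2R\,\norm{\uncervar - \uncervar'}.
\]
Since a componentwise Lipschitz bound survives under the operation $\max_i$, the map $\U \mapsto h(\decvar, \U) \Let \max_i \norm{\uncervar_i - \decvar}^2$ is $2R$-Lipschitz in $\U$ (with respect to, say, the $\ell^\infty$ product metric on $\admuncer^{\dimsp+1}$), uniformly in $\decvar$. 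Taking infimum over $\decvar$ on both sides of $|h(\decvar, \U) - h(\decvar, \U')| \le 2R\,\|\U - \U'\|$ yields $|\innerG(\U) - \innerG(\U')| \le 2R\,\|\U - \U'\|$, establishing (global) Lipschitz continuity on $\admuncer^{\dimsp+1}$.

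Finally, for the value identity, I would invoke Theorem~\ref{t:key} applied to the \csip{} \eqref{e:csip smooth cheby}, identifying $\dimvarcsip = \dimsp + 1$, $\dimuvarcsip = \dimsp$, $\admvarcsip = \R[] \times \admdec$, $\admuvarcsip = \admuncer$, $\obj(\slack, \decvar) = \slack$, and $\constr((\slack, \decvar), \uncervar) = \norm{\uncervar - \decvar}^2 - \slack$. One checks the hypotheses \ref{csip:sets}--\ref{csip:interior}: the objective and constraint are continuous and convex in $(\slack, \decvar)$, the set $\R[] \times \admdec$ is closed convex with nonempty interior (after embedding $\admdec$ into a slightly larger open convex $\domain$), and the admissible set contains any $(\slack, \decvar)$ with $\slack$ larger than $\sup_{\uncervar \in \admuncer} \norm{\uncervar - \decvar}^2$, yielding interior points. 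Since the value of \eqref{e:smooth cheby} equals that of its \csip{} form \eqref{e:csip smooth cheby} by the same slack-reformulation that led to \eqref{e:csip cheby}, Theorem~\ref{t:key} delivers $\mathrm{val}\eqref{e:smooth cheby} = \sup_{\U \in \admuncer^{\dimsp+1}} \innerG(\U)$.

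The main obstacle I anticipate is keeping the Lipschitz constant explicit and norm-independent enough that the argument goes through cleanly; but because $\admdec$ and $\admuncer$ are both compact, the bound $2R$ is uniform in $\decvar$, and the ``$\inf$ of uniformly Lipschitz family'' trick disposes of this without any differentiability or envelope-theorem machinery. The remaining subtlety is only bookkeeping: verifying the interior condition \ref{csip:interior} of Theorem~\ref{t:key}, which follows by choosing a strictly feasible $(\slack, \decvar)$ obtained by taking $\slack$ strictly larger than the uniform upper bound of $\norm{\uncervar - \decvar}^2$ over $\admuncer$.
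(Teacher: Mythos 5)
Your proposal is correct, and the value-identity part (verifying the hypotheses of Theorem~\ref{t:key} for \eqref{e:csip smooth cheby} via strict feasibility with a large slack) matches the paper's argument. Where you genuinely diverge is the Lipschitz claim. You first eliminate the slack variable to write \(\innerG(\U) = \inf_{\decvar\in\admdec}\max_i \norm{\uncervar_i - \decvar}^2\), observe that each \(\U \mapsto \max_i\norm{\uncervar_i-\decvar}^2\) is \(2R\)-Lipschitz uniformly in \(\decvar\) (with \(R\) the diameter-type bound from compactness), and use the elementary fact that an infimum of a uniformly Lipschitz family is Lipschitz with the same constant. The paper instead keeps the constrained formulation, proves uniqueness of the inner optimizer via strong convexity of the norm, verifies the Mangasarian--Fromovitz constraint qualification and uniform compactness of the feasible-set map, invokes the Fiacco--Ishizuka theorem on parametric nonlinear programs to obtain \emph{local} Lipschitz continuity, and then upgrades to global Lipschitz continuity by a compactness-and-contradiction argument. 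Your route is shorter, entirely self-contained, and produces an explicit global Lipschitz constant; the paper's heavier machinery has the advantage of generalizing to CSIPs whose constraints do not admit such a clean slack elimination, and its uniqueness step (which your argument bypasses and which is not actually needed for the value-function claim) feeds into the optimizer-extraction assertion of Theorem~\ref{t:key}. Both proofs are valid for the statement as given.
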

\begin{proof}
	The mapping \(\innerG\) is \(\R[]\)-valued since the admissible set is non-empty. Indeed, since \(\admuncer\) is non-empty and compact, it is bounded. Consequently, \(\innerG\) admits an upper bound of \(\diam\admuncer \Let \sup_{x', x''\in\admuncer} \norm{x' - x''}\), while \(0\) is an obvious lower bound of \(\innerG\). To wit, \(\innerG\) is \(\R[]\)-valued.

	For \(\slack' > \diam(\admuncer)\) and arbitrary \(\decvar'\in\admuncer\) we have \(\norm{\decvar - \uncervar}^2 - \slack' < 0\) for all \(\uncervar\in\admuncer\), which means that \eqref{e:csip smooth cheby} is strictly feasible. Therefore, the strict feasibility condition \ref{csip:interior} in the context of \eqref{e:csip} holds in the case of \eqref{e:csip smooth cheby}. The remaining hypotheses of Theorem \ref{t:key} are clearly satisfied for \eqref{e:csip smooth cheby}, and its assertion guarantees that the value of \eqref{e:smooth cheby} is 
    \( \sup_{\U\in\admuncer^{\dimsp+1}} \innerG(\U) \).
    % \(\bigl( \sup_{\U\in\admuncer^{\dimsp+1}} \innerG(\U) \bigr)^{\frac{1}{2}}\).

	It remains to prove Lipschitz continuity of \(\innerG\), which we present in a sequence of steps:

	\textsf{Step 1:} The optimization problem on the right-hand side of \eqref{e:innerG smooth} admits a solution for every \(\U\in\admuncer^{\dimsp+1}\). Indeed, this is an immediate consequence of Weierstrass's theorem in view of continuity of the objective and constraint functions and compactness of the admissible set.% (since closed subsets of compact sets are compact).

	\textsf{Step 2:} The set-valued map \(\slnmapopt:\admuncer^{\dimsp+1}\setto\R[\dimsp]\) defined by
	\begin{equation*}
	    \admuncer^{\dimsp+1}\ni \U\mapsto \slnmapopt(\U)\Let\argmin_{(\slack, \decvar)\in\R[]\times\admdec}\aset[\Big]{ \slack\suchthat \norm{\decvar - \uncervar_i}^2 \le \slack\text{ for }i = 1, \ldots, \dimsp+1}
	\end{equation*}
	is a mapping. Indeed, that \(\slnmapopt\) is non-empty valued follows from \textsf{Step 1}. Moreover, since \(\admuncer \neq \emptyset\) by hypothesis, for any \(y\in\admuncer\) and for \(\U = (\uncervar_1, \ldots, \uncervar_{\dimsp+1})\) the pair \(\bigl(\max_{i=1, \ldots, \dimsp+1} \norm{\uncervar_i - y}^2, y\bigr)\in\R\times\admdec\) is in the admissible set of the indicated optimization problem. Moreover, the objective function of the optimization problem is such that if \((\slack', \decvar')\) and \((\slack'', \decvar'')\) are two optimizers, then of course \(\slack' = \slack''\), which means that two optimizers can differ only in the second component. But then due to strong convexity of the norm \(\norm{\cdot}\), the constraints for \(k = 1, \ldots, \dimsp+1\) give us
	\[
		\norm{2\inverse \decvar' + 2\inverse \decvar'' \!- \uncervar_k}\!< 2\inverse (\norm{\decvar' - \uncervar_k} + \norm{\decvar'' - \uncervar_k})\!< \sqrt{\slack'},
	\]
	implying that neither \((\slack', \decvar')\) nor \((\slack', \decvar'')\) is an optimizer (because one obtains a better value for the pair \(\bigl(\max_{i=1, \ldots, \dimsp+1}\norm{2\inverse(x' + x'')}^2, 2\inverse(x' + x'')\bigr)\in\R[]\times\admdec\), which is admissible due to convexity of \(\admuncer\)), contradicting our premise. Uniqueness of optimizers follows, and therefore, \(\slnmapopt\) is a mapping.

%	\textsf{Step 3:} The set-valued map \(\slnmapfeas:\admuncer^{\dimsp+1}\setto\R[\dimsp+1]\) defined by
%	\begin{equation}
%		\label{e:slnmap feas}
%		\admuncer^{\dimsp+1}\ni \U \mapsto \slnmapfeas(\U) \Let \bigcap_{i=1}^{\dimsp+1}\aset[\big]{(\slack, \decvar)\in\R[]\times\admdec\suchthat \norm{\uncervar_i - \decvar}^2 - \slack \le 0} \subset\R[\dimsp]
%	\end{equation}
%	is Pompieu-Hausdorff continuous. Indeed, we note that \(\slnmapfeas\) is Painlev{\'e}-Kuratowski continuous in view of \cite[Example 3B.4]{ref:DonRoc-14}, and since \(\slnmapfeas(\U)\) is a non-empty and closed subset of the bounded set \(\admdec\) for every \(\U\in\admuncer^{\dimsp+1}\), \cite[Theorem 3B.3]{ref:DonRoc-14} gives us the assertion.
%
%	\textsf{Step 4:} Since the objective function of the optimization program on the right-hand side of \eqref{e:innerG smooth} does not depend on \(\U\), in the light of \textsf{Step 3}, \cite[Theorem 3B.5]{ref:DonRoc-14} asserts that \(\slnmapopt\) is outer semicontinuous. Since \(\slnmapopt\) is singleton-valued, continuity of \(\slnmapopt\) follows. Since \(\innerG(\U)\) is the first entry of the vector \(\slnmapopt(\U)\) at each \(\U\in\admuncer^{\dimsp+1}\), continuity of \(\slnmapopt\) ensures continuity of \(\innerG\).

	\textsf{Step 3:} Fix \(\U\in\admuncer^{\dimsp+1}\) and consider the convex nonlinear program 
	\begin{equation}
		\label{e:value of innerG}
		\inf_{(\slack, \decvar)\in\R[]\times\admdec}\aset[\Big]{\slack\suchthat \norm{\uncervar_i - \decvar}^2 \le \slack\text{ for }i = 1, \ldots, \dimsp+1}
	\end{equation}
	on the right-hand side of \eqref{e:innerG smooth}. The family of \((\dimsp+1)\) constraints in the preceding program admits the Jacobian (derivative) matrix
	\(
		J(\U; \decvar) \Let \left(\begin{smallmatrix} -1 & 2(\decvar - \uncervar_1)\transpose\\ -1 & 2(\decvar - \uncervar_2)\transpose\\ \vdots & \vdots \\ -1 & 2(\decvar - \uncervar_{\dimsp+1})\transpose\end{smallmatrix}\right)\quad\text{for }x\in\admuncer,
	\)
	and clearly \(v \Let \pmat{1 & 0_{\dimsp+1}\transpose}\transpose\in\R[\dimsp+1]\) satisfies 
	\(
		J(\U; \decvar)\cdot v = \pmat{-1 & \cdots & -1}\transpose.
	\)
	Therefore, the Mangasarian-Fromovitz constraint qualification (MFCQ) condition holds for \eqref{e:value of innerG} for the entire family of constraints, and consequently, also for the active constraints. In addition, the set-valued map \(\slnmapfeas\) defined by 
    \begin{equation}
        \label{e:slnmap feas}
		\admuncer^{\dimsp+1}\ni \U \mapsto \slnmapfeas(\U), \\
        \slnmapfeas(\U) \Let \bigcap_{i=1}^{\dimsp+1}\aset[\big]{(\slack, \decvar)\in\R[]\times\admdec\suchthat \norm{\uncervar_i - \decvar}^2 - \slack \le 0} \subset\R[\dimsp]
    \end{equation}
    is uniformly compact because its image is contained in the compact set \(\lcrc{0}{\diam\admuncer}\times\admdec\subset\R[]\times\admdec\) due to the definition of \eqref{e:value of innerG}. \cite[Theorem 4.2]{ref:FiaIsh-90} --- reproduced in Proposition \ref{p:Lip solutions} in Appendix \secref{s:Lip solutions}, therefore, applies to \eqref{e:value of innerG}, and guarantees local Lipschitz continuity of \(\innerG\) around \(\U\).
	
    \textsf{Step 4:} The map \(\innerG\) is Lipschitz continuous. This follows from \cite[Theorem 2.1.6]{ref:CobMicNic-19}, but we provide a quick proof here. Since \(\U\) was selected in \textsf{Step 3} arbitrarily, we get the local Lipschitz property of \(\innerG\) around every \(\U\in\admuncer^{\dimsp+1}\), and consequently, \(\innerG\) is continuous. Since \(\admuncer^{\dimsp+1}\) is compact, \(\innerG\) attains its maximum and minimum on \(\admuncer^{\dimsp+1}\), and let \(M \Let \sup_{\U\in\admuncer^{\dimsp+1}}\innerG(\U) - \inf_{\U'\in\admuncer^{\dimsp+1}}\innerG(\U')\). If \(\innerG\) is \emph{not} Lipschitz on \(\admuncer^{\dimsp+1}\), then 
	\(
		\sup_{\substack{\U, \U'\in\admuncer^{\dimsp+1}\\\U\neq\U'}} \frac{\abs{\innerG(\U) - \innerG(\U')}}{\norm{\U - \U'}} = +\infty.
	\)
	That means there exist sequences \((\U_k)_{k\in\N}, (\U'_k)_{k\in\N} \subset\admuncer^{\dimsp+1}\) such that
	%\begin{equation}
	%	\label{e:limit Lipschitz}
	\(	\lim_{k\to+\infty} \frac{\abs{\innerG(\U_k) - \innerG(\U_{k}')}}{\norm{\U_k - \U_{k}'}} = +\infty.\)
	%\end{equation}
	Since \(\admuncer^{\dimsp+1}\) is compact, it follows that there exist subsequences \((\U_{k_\ell})_{\ell\in\N}\) and \((\U'_{k_\ell})_{\ell\in\N}\) that converge to some \(\ol v\) and \(\ol{v}'\) in \(\admuncer^{\dimsp+1}\), respectively. But then, since \(\innerG\) is bounded on \(\admuncer^{\dimsp+1}\), the numerator of the preceding limit %in \eqref{e:limit Lipschitz} 
    is bounded by \(2M\), which gives us
	\[
		%\eqref{e:limit Lipschitz}\quad\Rightarrow\quad 
        \bigl(\norm{\U_{k_\ell} - \U_{k_\ell}'} \xrightarrow[\ell\to+\infty]{} 0\bigr) \quad\Rightarrow\quad (\ol v = \ol v').
	\]
	But this leads to \(\lim_{\ell\to+\infty} \frac{\abs{\innerG(\U_{k_\ell}) - \innerG(\ol v')}}{\norm{\U_{k_\ell} - \ol v'}} = +\infty\), contradicting local Lipschitz continuity of \(\innerG\) around \(\ol v'\), and thus asserting Lipschitz continuity of \(\innerG\) on \(\admuncer^{\dimsp+1}\).
\end{proof}

Theorem~\ref{p:main result} relies on the set \(\admuncer\) being a compact subset of \(\R[\dimsp]\). While \(\admuncer\) could be discrete for this result to hold, the application of gradient based techniques such as \gradOL{} is reliant on the set \(\admuncer^{\dimsp+1}\) being a reasonably ``nice'' (e.g., convex) subset of \(\R[\dimuncer]\), which is true in a  majority of applications.

%The indicated smoothing was \emph{not} carried out in our numerical experiments, mainly for two reasons: (a) to focus on benchmarking at the algorithmic level rather than the problem formulation level, and (b) because the improvement in run-times when solving the Chebyshev center problem instances with the smoothed version was negligible. For example, on instances such as \emph{hettich2}, \emph{hettich4}, and \emph{leon7} (see Table~\ref{tab:results_table1} for the definitions), the average run-time improvements were 0.0028ms, 0.0261ms, and 0.033ms, respectively. Nevertheless, it is generally a good practice to use the smoothed version of the Chebyshev center problem, as it may lead to performance improvements in certain cases.

% In the case of generic \csip{}s, the regularity requirements from \(\obj\) and \(\constr\) needed to ensure local Lipschitzness of \(\innerG\) may not hold. The success of gradient-based algorithms such as \gradOL{} to solve the corresponding maximization problems (\`a la \eqref{e:maxmin} in Theorem \ref{t:key}) to optimality becomes, consequently, uncertain.

\vspace{-.5em}
\subsection*{The algorithm \gradOL{}}
\label{subsec:Algo}\vspace{-.5em}
Our chief contribution, the algorithm $\gradOL{}$ described below, operates within an iterative framework, constructing $\innerG(\uncervar_1,\dots,\uncervar_{\dimsp+1})$ as a computational graph rooted at the variable nodes $(\uncervar_1,\dots,\uncervar_{\dimsp+1})$. For each given $\U=(\uncervar_1,\dots,\uncervar_{\dimsp+1})$, the inner constrained minimization over $(\slack, \decvar)$ is solved using an unconstrained log-barrier method. This process iteratively updates $(\slack_k, \decvar_k)$, starting from an initial guess $(\slack_0, \decvar_0)$, while preserving the computational graph dependency on $\U$, thereby explicitly representing $(\slack, \decvar)$ as a function of $\U$. Once the inner optimization reaches optimality, $\innerG(\U)$ can be computed, and its numerical gradients are obtained via backpropagation through the computational graph, eliminating the need for explicit gradient derivation. We now present the \gradOL{} algorithm.

Let $\alpha > 0$ be a ``barrier parameter''~\citep{ref:Boyd2004convex}, and let $\ell(\cdot)$ be a (safe) log function, e.g.\ $\ell(t) \Let\log(\max(t, \epsilon))$ for a small $\epsilon > 0$. Given a current estimate $\U_k := (\uncervar_1, \ldots, \uncervar_{\dimsp+1}) \in \admuncer^{\dimsp+1}$  at iteration $k$, we solve the inner problem\vspace{-.5em}
\begin{equation}
    \label{eq:innermin}
    \min_{(\slack, \decvar) \in \R[]\times\admdec} \;\;
    \left(\alpha \, \slack 
    \;-\; \sum_{i=1}^{\dimsp+1} \ell\Bigl(\slack-\|\decvar-u_i\|^2\Bigr) \right)\coloneqq \G(\U_k).
\end{equation}
Let $(\slack_k, \decvar_k)$ be a minimizer of \eqref{eq:innermin}. Next, for the outer step, we take a step from $\U_k$ in the direction that increases a target function $\G(\U_k)$. In our illustrative gradient-based scheme, we compute $\nabla_{\U} \G(\U_k)$, and perform an update to obtain $\U_{k+1}$. The step size (learning rate) may be constant or adaptive. Note that a reasonable initial estimate for $(\slack,\decvar)$ can be obtained using standard off-the-shelf convex optimization solvers like CVX or Convex.jl. In our framework, we utilize them to reduce the number of iterative updates required. The use of automatic differentiation libraries (e.g.\ \textsf{Zygote} in \textsf{Julia} or \textsf{Autograd} in \textsf{PyTorch}) facilitates gradient estimation, while off-the-shelf solvers enhance computational speed via warm starts. For a general \csip{}, the objective in \eqref{eq:innermin} is replaced with $\obj(\decvar)$, while the log-barrier is used to enforce the constraints $\{f(\decvar,\uncervar_i)\}$. 

\begin{algorithm}[h!]
\caption{\gradOL{}}%{Min--Max Log-Barrier Algorithm for CSIP}
\label{alg:csip}
\begin{algorithmic}[1]
\STATE \textbf{Input:} Initial guess $\U_0\Let(u_1,\dots,u_{\dimsp+1})$; barrier parameter $\alpha > 0$; max epochs $\texttt{M}$; tolerance $\delta > 0$.
\FOR{$k = 0, 1, 2, \ldots, \texttt{M-1} $}
\STATE Declare $(u_1,\dots,u_{\dimsp+1})$ as variable nodes in computational graph; Initial guess $\left( \slack_0, \decvar_0 \right)$
\STATE \textbf{(Inner) minimization step:}
\begin{enumerate}[leftmargin=*]
    \renewcommand{\labelenumi}{}
    \vspace{-1.5em}
    \item \begin{align*}
        \bigl(\slack_{k+1}\opt(\U_k),\decvar_{k+1}\opt(\U_k)\bigr)\in \underset{(\slack, \decvar)\in\R[]\times\R[\dimsp]}{\argmin}\Bigl\{\alpha \slack - \sum\nolimits_{i=1}^{\dimsp+1}\ell\Bigl(\slack-\|\decvar-u_i\|^2\Bigr)\Bigr\}
    \end{align*}
    \vspace{-1.5em}\item $\bigl(\slack_{k+1}\opt(\U_k),\decvar_{k+1}\opt(\U_k)\bigr)$ is obtained iteratively using off-the-shelf solvers, such as  Convex.jl, while retaining the computational graph
\end{enumerate}
\STATE \textbf{(Outer) Update of $\U_k$:}
\begin{enumerate}[leftmargin=*]
    \renewcommand{\labelenumi}{}
    \item Define the objective function in terms of $\U_k$,\vspace{-1em}
    \[
    \G(\U_k) = \f\bigl(\slack_{k+1}\opt(\U_k),\decvar_{k+1}\opt(\U_k)\bigr)
    \]
    \item Compute the gradient $\nabla_{\U_k}\G(\U_k)$ using the automatic differentiation library.
    \item Update $\U_{k+1} = \U_{k} + \eta \nabla_{\U} \G(\U_k)$, possibly applying clipping/projection to keep $\U_{k+1}$ in $\admuncer^{\dimsp+1}$.
\end{enumerate}
\STATE \textbf{Check convergence:}
 If $\|\U_{k+1} - \U_k\| < \delta$, then \textbf{stop}.
\ENDFOR
\STATE \textbf{Return:} $\bigl(\slack_{\texttt{M}}\opt(\U_{\texttt{M-1}}), \decvar_{\texttt{M}}\opt(\U_{\texttt{M-1}})\bigr)$
\end{algorithmic}
\end{algorithm}
\begin{remark}[On the computational complexity of \gradOL{}]
    The function $\G$ is potentially non-smooth, non-concave, and subject to constraints, which makes deriving convergence guarantees for \gradOL{} highly nontrivial and outside the main scope of this work. Nonetheless, by suitably adapting the proof techniques developed in~\cite{liu2024zeroth}, our preliminary analysis shows that \gradOL{} converges to a generalized Goldstein stationary point of $\G(\cdot)$ with complexity 
   \[ \boxed{\bigO\left(\frac{4G^2Bc\sqrt{n+1}\Delta}{\delta(\epsilon^2-4G^2-8e^2)}\right)}\]
    where $\epsilon$ denotes the allowable error in evaluating $\G$. The constants $G$, $B$, and $c$ capture, respectively, the regularity of the objective function, the size of the feasible set, and the details of the smoothing procedure (see~\cite{liu2024zeroth}). Here, $n+1$ is the dimension of the uncertainty variable, and $e$ denotes the error in approximating the gradient $\nabla\G$, obtained through automatic differentiation frameworks such as \textsf{zygote}. The term $\Delta$ represents the computational cost of the inner minimization solver (e.g., \textsf{CLARABEL} in our case), which scales as $\bigO(\sqrt{\alpha}\log(1/\epsilon))$ with the barrier parameter $\alpha$. Importantly, \gradOL{} achieves a dependence of only $\sqrt{n+1}$ in the outer maximization step—this is the best-known scaling, as existing methods often incur linear or even exponential dependence on $n$. A sketch of the proof for this preliminary result is deferred to Appendix~\ref{app-sec:convergence} due to space constraints.
\end{remark}

\section{Numerical Experiments and Benchmarking}\label{sec:experiments}
\begin{table*}[t]
  \centering
  \caption{\gradOL{} performance comparison on Chebyshev Center and CSIP benchmarks}
  \label{tab:results}
  \small
  \rowcolors{2}{gray!10}{white}
  \begin{tabular}{
    l
    S[table-format=2.0] S[table-format=5.2]
    S[table-format=2.0] S[table-format=5.2]
  }
    \toprule
    & \multicolumn{2}{c}{\textbf{Chebyshev Center} (34 instances)} 
    & \multicolumn{2}{c}{\textbf{CSIP} (33 instances)} \\
    \cmidrule(lr){2-3} \cmidrule(lr){4-5}
    \textbf{Method}
      & \textbf{Solved}
      & \textbf{Avg.\ Runtime (ms)}
      & \textbf{Solved}
      & \textbf{Avg.\ Runtime (ms)} \\
    \midrule
    Iterative Sampling       &  9 &  3880.95 & 20 &  1695.37 \\
    MSA–Simulated Annealing  & 19 & 23768.12 & 20 & 16417.43 \\
    \sipampl{}                  & 32 & 50906.00    & 28 & 72336.00    \\
    \rowcolor{gray!25}
    \textbf{\gradOL{}}       & \textbf{34} & \textbf{638.79}  & \textbf{33} & \textbf{303.77}  \\
    \bottomrule
  \end{tabular}
\end{table*}
We implement $\gradOL{}$ in \textsf{Julia} (version 1.8.1)\footnote{Source code is included in the supplementary.} and benchmark its performance on an Ubuntu 24.04 system. The hardware configuration consists of an AWS t2.large instance with 2 vCPUs, backed by an Intel Xeon processor. The instance has 8 GiB of RAM. Using \textsf{Julia}'s \textsf{BenchmarkTools} library, we measure the mean runtime of $\gradOL{}$ across $10^3$ iterations for each of the 67 \textbf{hard} \csip{} problems listed in \citep{vaz2001csip}. All computations run on the CPU without GPU acceleration, ensuring that the reported performance reflects only the algorithm and its implementation.

We benchmark $\gradOL{}$ against the industry standard \sipampl{} package~\citep{ref:AMPL}, the best-known solver in the literature for solving \csip{}s, presented in Table~\ref{tab:results_table1}. In addition, $\gradOL{}$ is also compared with the recently reported simulated-annealing based approach~\citep{ref:ParCha-23} (a.k.a. the MSA-Simulated Annealing) and an iterative sampling based approach (details in Supplementary). In our experiments, we set the tolerance $\delta$ in Algorithm~\ref{alg:csip} between $1\times 10^{-4}$ and $1\times 10^{-3}$ and limit the maximum iterations \texttt{M} to $10^3$. The barrier parameter $\alpha$ was chosen to be sufficiently large ($\approx 10^{5}$). Since the algorithm performs outer maximization by computing $\nabla_{\U}\innerG$, the learning rate should ideally depend on \(\innerG\). We provide a systematic hyperparameter analysis of \gradOL{} under varying learning rates and barrier parameters, as detailed in Appendix \ref{app-sec:ablation}. 
%The detailed benchmarking results (against \sipampl{}) are presented in Table~\ref{tab:results_table1}.

For 65 of the 67 problems, the optimal values produced by $\gradOL{}$ deviates by less than $10^{-2}$ from previously reported results. The largest absolute errors appear in \textit{watson10} ($2.753 \times 10^{-1}$) and \textit{honstedel} ($1.697 \times 10^{-1}$). In particular, $\gradOL{}$ achieves an exact match (within the provided accuracy) for the objective function values in 5 problems. Our results highlight $\gradOL{}$'s efficiency, solving each CSIP problem in milliseconds on average. More specifically, $\gradOL{}$ achieves a remarkable improvement in runtimes (upto $\mathbf{\approx 4 \times 10^3}$) compared to the \sipampl{} solver on these benchmarks. The reduced computation time and consistent benchmark performance highlight the algorithm’s scalability and real-world potential. Table~\ref{tab:results} compares the performances of \gradOL{} with the aforementioned algorithms on the benchmark problem instances. Notably, \gradOL{} is significantly better than all other algorithms, underscoring the importance of gradient-based solvers for Chebyshev center problems and \csip{}s alike.

\begin{table*}[!htp]
% \tiny
\fontsize{7.5}{9}\selectfont
\centering
\caption{Comparison of algorithmic performance with benchmark solutions for CSIPs.}
\label{tab:results_table1}
\begin{threeparttable}
\begin{tabular}{lcccc}
\toprule
 \rowcolor{SeaGreen} \textbf{Problem} & \textbf{Value reported} & \textbf{\gradOL{} value} & \textbf{Time reported} (ms) & \textbf{\gradOL{} time} (ms) \\
\midrule

%%%%%%%%%%%%%%%%%%%%%%%%%%%%%
% \begin{table*}[t]
% \tiny
% \centering
% \caption{Comparison of algorithmic performance with benchmark solutions for CSIPs.}
% \label{tab:results_table1}

% \begin{tblr}{
%   colspec={lccc}
%   hline{1,2,Z} = {-}{0.18em}
% }
\rowcolor{azure9} coopeL \cite{ref:PriceCoope1996} & $3.4310 \times 10^{-1} $ & $3.3631 \times 10^{-1} $ & \text{-} & $4.067$  \\
\rowcolor{azure9} coopeM \cite{ref:PriceCoope1996} & $1.0000 \times 10^{0}$ & $1.0000 \times 10^{0}$ & \text{-} & $3.052$  \\
\rowcolor{azure9} coopeN \cite{ref:PriceCoope1996} & $0.0000 \times 10^{0}$ & $-6.9580 \times 10^{-3}$ & $1.19 \times 10^{3}$  & $3.209$  \\
% \midrule
fang1 \cite{ref:FangWu1994} & $4.7927 \times 10^{-1}$ & $4.7939 \times 10^{-1}$ & $2.173 \times 10^{4} $  & $28.737$  \\
fang2 \cite{ref:FangWu1994} & $6.9315 \times 10^{-1}$ & $6.7982 \times 10^{-1}$ & $2.307 \times 10^{4}$  & $65.094$  \\
fang3 \cite{ref:FangWu1994} & $1.7185 \times 10^{0}$ & $1.7183 \times 10^{0}$ & $2.128 \times 10^{4}$  & $33.526$  \\
% \midrule
\rowcolor{azure9} ferris1 \cite{ref:FerrisPhilpott1989} \quad \tnote{$\dagger$} & $4.8800 \times 10^{-3}$ & $4.9828 \times 10^{-4}$ & $5.99\times 10^{3}$  & $2.296 \times 10^{2} $  \\
\rowcolor{azure9} ferris2 \cite{ref:FerrisPhilpott1989} & $-1.7869 \times 10^{0}$ & $-1.7865 \times 10^{0}$ & $7.46\times 10^{3} $ & $8.177$  \\
% \midrule
goerner4 \cite{ref:Goerner1997}  \quad \tnote{$\dagger$} & $5.3324 \times 10^{-2}$ & $2.6209 \times 10^{-2}$ & $8.25\times 10^{3}$  & $2.658\times 10^{2}$  \\
goerner5 \cite{ref:Goerner1997}  \quad \tnote{$\dagger$}  & $2.7275 \times 10^{-2}$ & $2.5906 \times 10^{-2}$ & $2.2\times 10^{4}$  & $1.665\times 10^{2}$  \\
goerner6 \cite{ref:Goerner1997}   \quad \tnote{$\dagger$} & $1.0770 \times 10^{-3}$ & $5.9995 \times 10^{-5}$ & $4.658\times 10^{4} $ & $47.247$  \\
% \midrule
\rowcolor{azure9} honstedel \cite{ref:vanHonstede1979} & $1.2124 \times 10^{0}$ & $1.0424 \times 10^{0}$ & \text{-} & $3.984$  \\
% \midrule
kortanek1 \cite{ref:KortanekNo1993} & $3.2212 \times 10^{0}$ & $3.2184 \times 10^{0}$ & $4.802\times 10^{4}$  & $6.018$  \\
kortanek2 \cite{ref:KortanekNo1993} & $6.8629 \times 10^{-1}$ & $6.8628 \times 10^{-1}$ & $1.11\times 10^{3}$  & $9.324 \times 10^{2}$  \\
kortanek3 \cite{ref:KortanekNo1993}  \quad \tnote{$\dagger$}  & $1.4708 \times 10^{-2}$ & $2.2281 \times 10^{-4}$ & $1.5\times 10^{3} $ & $95.012$  \\
kortanek4 \cite{ref:KortanekNo1993}  \quad \tnote{$\dagger$}  & $5.2083 \times 10^{-3}$ & $3.7413 \times 10^{-5}$ & $2.666\times 10^{4} $ & $1.443$  \\
% \midrule
\rowcolor{azure9} leon1 \cite{ref:LeonSanmatiasVercher2000} \quad \tnote{$\dagger$}  & $4.5050 \times 10^{-3}$ & $2.4607 \times 10^{-4}$ & $1.29\times 10^{3}$  & $7.283 $ \\
\rowcolor{azure9} leon2 \cite{ref:LeonSanmatiasVercher2000}  \quad \tnote{$\dagger$} & $4.1880 \times 10^{-5}$ & $1.0002 \times 10^{-7}$ & $1.111\times 10^{4}$  & $12.273$  \\
\rowcolor{azure9} leon3 \cite{ref:LeonSanmatiasVercher2000}  \quad \tnote{$\dagger$} & $5.2190 \times 10^{-4}$ &$ 1.0002 \times 10^{-5} $& $5.12\times 10^{3}$  & $4.778\times 10^{2}$  \\
\rowcolor{azure9} leon4 \cite{ref:LeonSanmatiasVercher2000}  \quad \tnote{$\dagger$} & $2.6028 \times 10^{-3}$ &$ 1.3479 \times 10^{-4}$ & $1.381\times 10^{4}$  & $6.175 \times 10^{3}$  \\
\rowcolor{azure9} leon5 \cite{ref:LeonSanmatiasVercher2000}  \quad \tnote{$\dagger$} & $1.4257 \times 10^{-2}$ &$ 4.8963 \times 10^{-4}$ & $4.722\times 10^{4} $ & $14.99$  \\
\rowcolor{azure9} leon6 \cite{ref:LeonSanmatiasVercher2000}  \quad \tnote{$\dagger$} & $1.5540 \times 10^{-4}$ & $1.1692 \times 10^{-5}$ & $4.12\times 10^{3} $ & $7.378$  \\
\rowcolor{azure9} leon7 \cite{ref:LeonSanmatiasVercher2000}  \quad \tnote{$\dagger$} & $2.0997 \times 10^{-3}$ & $2.5904 \times 10^{-4}$ & $4.37\times 10^{3}$  & $21.41$  \\
\rowcolor{azure9} leon8 \cite{ref:LeonSanmatiasVercher2000}  \quad \tnote{$\dagger$} & $5.4222 \times 10^{-2}$ & $1.0002 \times 10^{-7}$ & $2.139\times 10^{4}$  & $5.146\times 10^{3}$  \\
\rowcolor{azure9} leon9 \cite{ref:LeonSanmatiasVercher2000}  \quad \tnote{$\dagger$} & $1.6338 \times 10^{-1}$ & $1.6338 \times 10^{-1}$ & $1.696\times 10^{4}$  & $8.694$  \\
\rowcolor{azure9} leon10 \cite{ref:LeonSanmatiasVercher2000}  \quad \tnote{$\dagger$} & $5.3825 \times 10^{-1} $& $5.3776 \times 10^{-1}$ & $2.65\times 10^{3}$  & $19.612$  \\
\rowcolor{azure9} leon11 \cite{ref:LeonSanmatiasVercher2000} \quad \tnote{$\dagger$}  & $4.8414 \times 10^{-2}$ & $2.3620 \times 10^{-3}$ & $1.28\times 10^{3} $ & $15.384$  \\
\rowcolor{azure9} leon13 \cite{ref:LeonSanmatiasVercher2000} \quad \tnote{$\dagger$} & $2.3607 \times 10^{-1}$ & $2.3531 \times 10^{-1}$ & $1.26\times 10^{3}$  & $3.972$  \\
\rowcolor{azure9} leon14 \cite{ref:LeonSanmatiasVercher2000} & $6.6667 \times 10^{-1}$ & $6.6640 \times 10^{-1}$ & $1.4\times 10^{3} $ & $5.151$  \\
\rowcolor{azure9} leon15 \cite{ref:LeonSanmatiasVercher2000} & $-6.6667 \times 10^{-1}$ & $-6.6657 \times 10^{-1}$ & $9.5\times 10^{2}$  & $4.234$  \\
\rowcolor{azure9} leon16 \cite{ref:LeonSanmatiasVercher2000} & $1.7263 \times 10^{0}$ & $1.7187 \times 10^{0}$ & $2.2\times 10^{2}$  & $3.04$  \\
\rowcolor{azure9} leon17 \cite{ref:LeonSanmatiasVercher2000} & $-2.0000 \times 10^{0}$ & $-1.9998 \times 10^{0}$ & $1.9\times 10^{2}$  & $3.4$  \\
\rowcolor{azure9} leon18 \cite{ref:LeonSanmatiasVercher2000} & $-1.7500 \times 10^{0}$ \tnote{$\ast$} & $-1.7000 \times 10^{0}$ & $3.63\times 10^{3}$  & $3.303$  \\
\rowcolor{azure9} leon19 \cite{ref:LeonSanmatiasVercher2000} & $7.8584 \times 10^{-1}$ \tnote{$\ast$} & $7.7543 \times 10^{-1}$ & $2.12\times 10^{3}$  & $4.003$  \\
\rowcolor{azure9} leon20 \cite{ref:LeonSanmatiasVercher2000} & $3.2380 \times 10^{-1}$ & $3.2316 \times 10^{-1}$ & $1.68\times 10^{3}$  & $11.713$  \\
\rowcolor{azure9} leon21 \cite{ref:LeonSanmatiasVercher2000} & $-9.9661 \times 10^{1}$ & $-9.9670 \times 10^{1}$ &$ 3.73\times 10^{3} $ & $4.399 \times 10^{3}$  \\
\rowcolor{azure9} leon22 \cite{ref:LeonSanmatiasVercher2000} & $-1.0472 \times 10^{1}$ & $-1.0472 \times 10^{1}$ & $5.9\times 10^{2} $ & $4.211\times 10^{3}$  \\
\rowcolor{azure9} leon23 \cite{ref:LeonSanmatiasVercher2000} & $-3.0857 \times 10^{1}$ & $-3.0857 \times 10^{1}$ & $5.1\times 10^{2}$  & $16.434$  \\
\rowcolor{azure9} leon24 \cite{ref:LeonSanmatiasVercher2000}  \quad  & $-1.1998 \times 10^{1}$ \tnote{$\ast$}& $-1.0370 \times 10^{1}$ & $1.54\times 10^{3}$  & $7.526$  \\
% \midrule
lin1 \cite{ref:LinFangWu1998} & $-1.8244 \times 10^{0}$ \tnote{$\ast$}& $-1.8300 \times 10^{0}$ & $2.98\times 10^{4}$  & $3.785$  \\
% \midrule
\rowcolor{azure9} reemtsen1 \cite{ref:Reemtsen1991}   \quad \tnote{$\dagger$} & $1.5249 \times 10^{-1}$ & $1.5231 \times 10^{-1}$ & $1.2689\times 10^{5}$  & $4.974 \times 10^{2}$  \\
\rowcolor{azure9} reemtsen2 \cite{ref:Reemtsen1991}  \quad \tnote{$\dagger$} & $5.8359 \times 10^{-2}$ & $5.6952 \times 10^{-2}$ & $1.0145\times 10^{5}$  & $17.249$  \\
\rowcolor{azure9} reemtsen3 \cite{ref:Reemtsen1991}  \quad \tnote{$\dagger$} & $7.3547 \times 10^{-1}$ & $7.3548 \times 10^{-1}$ & $1.6633\times 10^{5}$  & $15.484$  \\
\rowcolor{azure9} reemtsen4 \cite{ref:Reemtsen1991}  \quad \tnote{$\dagger$} & $1.1401 \times 10^{-2}$ & $1.0001 \times 10^{-5}$ & $4.5109\times 10^{5}$  & $64.185$  \\
\rowcolor{azure9} reemtsen5 \cite{ref:Reemtsen1991}  \quad \tnote{$\dagger$} & $8.8932 \times 10^{-2}$ & $2.8761 \times 10^{-2}$ & $1.4513\times 10^{5}$  & $1.950$  \\
\rowcolor{azure9} potchinkov3 \cite{ref:Potchinkov1997}  \quad \tnote{$\dagger$}  & \text{-} & $3.4226 \times 10^{-3}$ & \text{-} & $5.788$  \\
\rowcolor{azure9} potchinkovPL \cite{ref:Potchinkov1997}   \quad \tnote{$\dagger$} & \text{-} & $9.2940 \times 10^{-7}$ & \text{-} & $2.595$  \\
% \midrule
powell1 \cite{ref:Todd1994} & $-1.0000 \times 10^{0}$ & $-1.0000 \times 10^{0}$ &$ 9.2\times 10^{2}$  & $3.096$  \\
% \midrule
\rowcolor{azure9} hettich2 \cite{ref:Hettich1979}  \quad \tnote{$\dagger$} & $5.3800 \times 10^{-1}$ & $5.3742 \times 10^{-1}$ & $2.68\times 10^{3} $ &$ 3.98$  \\
\rowcolor{azure9} hettich4 \cite{ref:Hettich1979}  \quad \tnote{$\dagger$}  & $1.0000 \times 10^{0}$ & $1.0001 \times 10^{0}$ & $3.6\times 10^{2}$  & $8.137$  \\
\rowcolor{azure9} hettich5 \cite{ref:Hettich1979}  \quad \tnote{$\dagger$}  & $5.3800 \times 10^{-1}$ & $5.3505 \times 10^{-1}$ & $1.1995\times 10^{5}$  & $10.909$  \\
\rowcolor{azure9} hettich6 \cite{ref:Hettich1979}   \quad \tnote{$\dagger$} & $2.8100 \times 10^{-2}$ & $2.8163 \times 10^{-2}$ & $5.504\times 10^{4}$  & $1.735$  \\
\rowcolor{azure9} hettich7 \cite{ref:Hettich1979}  \quad \tnote{$\dagger$}  & $1.7800 \times 10^{-1}$ & $1.7776 \times 10^{-1}$ & $4.976\times 10^{4}$  & $7.776$  \\
\rowcolor{azure9} hettich8 \cite{ref:Hettich1979}  \quad \tnote{$\dagger$}  & \text{-} & $2.996 \times 10^{-2}$ & $2.290\times 10^{3} $ & $1.139 \times 10^{2}$  \\
\rowcolor{azure9} hettich9 \cite{ref:Hettich1979}  \quad \tnote{$\dagger$}  & $3.4700 \times 10^{-3} $& $3.4791 \times 10^{-3}$ & $8.465\times 10^{4}$  & $3.837$  \\
\rowcolor{azure9} hettich12 \cite{ref:Hettich1979}  \quad \tnote{$\dagger$} & \text{-} & $1.0252 \times 10^{-3}$ & $7.844\times 10^{4}$  &$ 8.695 \times 10^{3} $ \\
% \midrule
 priceK \cite{ref:Price1992} & $-3.0000 \times 10^{0}$ & $-3.0000 \times 10^{0}$ & $5.1\times 10^{2}$  & $3.191$  \\

% \midrule
\rowcolor{azure9} still1 \cite{ref:Still2001} & $1.0000 \times 10^{0}$ & $9.9771 \times 10^{-1}$ & $3.9\times 10^{2}$  & $4.888 $ \\
% \midrule
userman & \text{-} & $1.2802 \times 10^{-7}$ & \text{-} & $4.028$  \\
% \midrule
\rowcolor{azure9} watson4a \cite{ref:Watson} & $6.4904 \times 10^{-1}$ & $6.5012 \times 10^{-1}$ & $1.78\times 10^{3}$  & $10.116$  \\
\rowcolor{azure9} watson4b \cite{ref:Watson} & $6.1688 \times 10^{-1}$ & $6.1610 \times 10^{-1}$ & $2.22\times 10^{3}$  & $37.654$  \\
\rowcolor{azure9} watson4c \cite{ref:Watson} & $6.1661 \times 10^{-1}$ &$ 6.1564 \times 10^{-1}$ & $2.82\times 10^{3}$  & $10.029$  \\
\rowcolor{azure9} watson5 \cite{ref:Watson} & $4.3012 \times 10^{0} $& $4.2966 \times 10^{0}$ & $8.4\times 10^{2}$  & $17.097$  \\
\rowcolor{azure9} watson7 \cite{ref:Watson} & $1.0000 \times 10^{0}$ & $9.9777 \times 10^{-1}$ & $1.23\times 10^{3}$  & $7.534$  \\
\rowcolor{azure9} watson8 \cite{ref:Watson} & $2.4356 \times 10^{0}$ & $2.4356 \times 10^{0}$ & $2.189\times 10^{4}$  & $1.627\times 10^{2}$  \\
\rowcolor{azure9} watson10 \cite{ref:Watson} & $2.7527 \times 10^{-1} $& $-7.0000 \times 10^{-5}$ & \text{-} & $6.952$  \\
% \midrule
zhou1 \cite{ref:ZhouTits1996} \quad \tnote{$\dagger$}  & $2.3605 \times 10^{-1}$ & $2.3505 \times 10^{-1}$ & $3.09\times 10^{3}$  & $42.561 $ \\
\bottomrule

%\end{tblr}
\end{tabular}
\begin{tablenotes}
\footnotesize
% \item[1] \cite{ref:PriceCoope1996} 
% \item[2] \cite{ref:FangWu1994}
% \item[3] \cite{ref:FerrisPhilpott1989}
% \item[4] \cite{ref:Goerner1997}
% %\item[5] \cite{ref:Hettich1979}
% \item[5] \cite{ref:vanHonstede1979}
% \item[6] \cite{ref:KortanekNo1993}
% \item[7] \cite{ref:LeonSanmatiasVercher2000}
% \item[8] \cite{ref:LinFangWu1998}
% \item[9] \cite{ref:Reemtsen1991}
% \item[10] \cite{ref:Potchinkov1997}
% \item[11] \cite{ref:Todd1994}
% \item[12] \cite{ref:Hettich1979}
% \item[13] \cite{ref:Price1992}
% \item[14] \cite{ref:Still2001}
% \item[15] \cite{ref:Watson}
% \item[16] \cite{ref:ZhouTits1996}
\item[$\ast$] Not reported in original literature but obtained via \sipampl{}
\item[$\dagger$] This is a Chebyshev center problem.

\end{tablenotes}
\end{threeparttable}
\end{table*}
\section{Conclusion}\label{sec:conclusion}
This work introduces \gradOL{}, a novel algorithm for efficiently solving Chebyshev center problems, packaged within a robust \textsf{Julia} implementation. Extensive testing on 34 Chebyshev center problems, and more generally, 67 \csip{}s demonstrated its superior accuracy and superb computational efficiency over existing solvers. While \gradOL{} proves to be highly effective, certain CSIP instances challenge gradient-based methods, highlighting areas for improvement, especially centering around adaptation of the learning rate to the problems under consideration. Future work will focus on enhancing robustness for such cases, extending the theoretical framework, exploring hybrid optimization approaches, and improving scalability to high-dimensional settings, aiming to establish \gradOL{} as a versatile tool for broader optimization tasks.%% concl.tex
%% Conclusions

% \clearpage
% \input{impact}
\bibliography{refs}
\bibliographystyle{tmlr}

\clearpage
\appendix
\thispagestyle{empty}
\renewcommand{\theequation}{A\arabic{equation}}
\setcounter{equation}{0}

% Supplementary material: To improve readability, you must use a single-column format for the supplementary material.
\onecolumn
%\aistatstitle{Supplementary Materials}

\section{Lipschitz solutions to multiparametric programming problems}
\label{s:Lip solutions}

Let \(\nu\in\N\) and let \(\linmaps[2](\R[\nu]\times\R[\nu]; \R[])\) denote the family of symmetric bilinear maps from \(\R[\nu]\times\R[\nu]\) into \(\R[]\). Recall that a mapping \(\varphi:\R[\nu]\to\R[]\) is \emph{locally Lipschitz} if at each point \(z\in\R[\nu]\) there exists a neighborhood \(\nbhd\ni z\) and \(L > 0\) such that \(\abs{\varphi(z') - \varphi(z'')} \le L\norm{z' - z''}\) whenever \(z', z''\in\nbhd\). The constant \(L\) depends on \(z\) and \(\nbhd\) in general, and \(L\) is the \emph{Lipschitz modulus} of the map \(\varphi\). In particular, if \(\varphi\) is twice continuously differentiable, then \(\varphi\) is locally Lipschitz.% and \(\pdv[order=2]{\varphi}{z}(z')\in\linmaps[2](\R[\nu]\times\R[\nu]; \R[])\) for every \(z'\in\R[\nu]\).

Recall that a set-valued map \(F:\R[\nu]\setto\R[\nu']\) is a mapping \(F:\R[\nu]\to 2^{\R[\nu']}\) in the standard sense; i.e., \(F\) assigns to each vector \(y\in\R[\nu]\) a subset of \(\R[\nu']\). Such a set-valued map is \emph{uniformly compact} around \(y\in\R[\nu]\) if there is a neighborhood \(O\) containing \(y\) such that \(\bigcup_{y'\in O} F(y')\) is bounded.

Consider the parametric nonlinear program
\begin{equation}
	\label{e:Fia}
	\begin{aligned}
		& \minimize_{\varnlp} && \obj(\varnlp, \parameter)\\
		& \sbjto &&
		\begin{cases}
			\constr_i(\varnlp, \parameter) \le 0 \quad \text{for }i = 1, \ldots, p,\\
			\varnlp\in \R[\dimnlp], \parameter\in\paramset,
		\end{cases}
	\end{aligned}
\end{equation}
along with the data
\begin{enumerate}[label=\textup{(\ref{e:Fia}.\alph*)}, align=left, widest=b, leftmargin=*]
	\item \(\paramset\subset\R[\dimparam]\) is a non-empty and compact set,
	\item \(\obj:\R[\dimnlp]\times\R[\dimparam] \to \R[]\) is a continuously differentiable (objective) function,
	\item \(\constr_i:\R[\dimnlp]\times\R[\dimparam]\to\R[]\) is a continuously differentiable (constraint) function for each \(i = 1, \ldots, p\).
\end{enumerate}
The feasible set for \eqref{e:Fia} is the set-valued map \(\slnmapfeas:\paramset\setto\R[\dimnlp]\), the value of \eqref{e:Fia} is the function \(\paramset\ni\parameter \mapsto \slnmapopt(\parameter) \Let \text{value of \eqref{e:Fia}} \in\R[]\), and the optimizers are given by the set-valued map \(\slnmapopt:\paramset\setto\R[\dimnlp]\) defined by
\[
	\slnmapopt(\parameter) = \aset[\big]{y\in\R[\dimnlp]\suchthat \obj(y, \parameter) = \slnmapval(\parameter)}.
\]

\begin{proposition}[{\cite[Theorem 4.2]{ref:FiaIsh-90}}]
	\label{p:Lip solutions}
	Consider \eqref{e:Fia} along with its associated data. For \(\bar\parameter\in\paramset\) and a point \(\bar\varnlp\in\slnmapopt(\bar\parameter)\), let \(I(\bar\varnlp) \Let \aset[\big]{i\in\aset{1,\ldots, p}\suchthat \constr_i(\bar\varnlp, \bar\parameter) = 0}\) denote the set of active constraints at \((\bar\parameter, \bar\varnlp)\). Suppose that there exists a vector \(v\in\R[\dimnlp]\) such that \(\inprod{\nabla_{\varnlp}\constr_i(\bar\varnlp, \bar\parameter)}{v} < 0\) for each \(i\in I(\bar\varnlp)\),\footnote{In other words, the Mangasarian-Fromovitz constraint qualification conditions hold at $\bar\varnlp$.} and that the set-valued map \(\slnmapfeas(\cdot)\) is uniformly compact around \(\bar\parameter\). Then the function \(\slnmapval(\cdot)\) corresponding to \eqref{e:Fia} is locally Lipschitz around \(\bar\parameter\).
\end{proposition}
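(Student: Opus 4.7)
The plan is to establish local Lipschitz continuity of \(\slnmapval\) around \(\bar\parameter\) by first proving a pair of one-sided calmness estimates at \(\bar\parameter\), then upgrading these pointwise bounds to a uniform modulus on a neighborhood by exploiting the \emph{open} character of the Mangasarian--Fromovitz inequality together with uniform compactness of \(\slnmapfeas\).

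For the upper calmness bound \(\slnmapval(\parameter) - \slnmapval(\bar\parameter) \leq L\,\|\parameter - \bar\parameter\|\), I would construct a \(\parameter\)-feasible perturbation of \(\bar\varnlp\) using the MFCQ direction. Define \(\varnlp(\parameter) \Let \bar\varnlp + \rho\,\|\parameter - \bar\parameter\|\,v\) with a constant \(\rho > 0\) to be chosen. Continuous differentiability of each \(\constr_i\) together with \(\constr_i(\bar\varnlp, \bar\parameter) = 0\) for \(i \in I(\bar\varnlp)\) yields the first-order estimate
\[
    \constr_i\bigl(\varnlp(\parameter), \parameter\bigr) \leq \rho\,\|\parameter - \bar\parameter\|\,\inprod{\nabla_{\varnlp}\constr_i(\bar\varnlp, \bar\parameter)}{v} + K\,\|\parameter - \bar\parameter\| + o(\|\parameter - \bar\parameter\|)
\]
for each active \(i\); the MFCQ negativity of the first coefficient allows \(\rho\) to be tuned so that all active constraints are non-positive, while the inactive ones persist by continuity. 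Continuous differentiability of \(\obj\) then controls \(\obj(\varnlp(\parameter), \parameter) - \obj(\bar\varnlp, \bar\parameter)\) by \(O(\|\parameter - \bar\parameter\|)\), and the bound \(\slnmapval(\parameter) \le \obj(\varnlp(\parameter), \parameter)\) delivers the upper calmness estimate.

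The lower calmness bound \(\slnmapval(\bar\parameter) - \slnmapval(\parameter) \leq L'\,\|\parameter - \bar\parameter\|\) is the delicate step. Given any \(\varnlp_\parameter \in \slnmapopt(\parameter)\), uniform compactness of \(\slnmapfeas\) confines \(\varnlp_\parameter\) to a fixed compact set, and a Berge-type argument (enabled by lower hemicontinuity of \(\slnmapfeas\) under MFCQ together with Weierstrass) places any accumulation point of \(\{\varnlp_\parameter\}\) inside \(\slnmapopt(\bar\parameter)\). I would mirror the upper construction by setting \(\tilde\varnlp \Let \varnlp_\parameter + \rho'\,\|\parameter - \bar\parameter\|\,v\) and argue its feasibility at \(\bar\parameter\) via the same first-order expansion. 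Crucially, the inequality \(\inprod{\nabla_{\varnlp}\constr_i(\varnlp, \parameter)}{v} < 0\) is open in \((\varnlp, \parameter)\), so \(v\) remains an inward-pointing direction at every \((\varnlp_\parameter, \parameter)\) sufficiently close to \((\bar\varnlp, \bar\parameter)\); a finite covering of (the confinement of) the relevant optimizer set by such local regions would then patch pointwise Lipschitz moduli into a uniform modulus on a neighborhood of \(\bar\parameter\).

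The main obstacle is precisely this lower-calmness step: MFCQ is postulated only at the single pair \((\bar\varnlp, \bar\parameter)\), whereas the active set of an optimizer of a perturbed problem can differ from \(I(\bar\varnlp)\) and the optimizer itself can lie far from \(\bar\varnlp\). Handling this cleanly requires three ingredients in concert --- openness of the MFCQ inequality, upper hemicontinuity of \(\slnmapopt\), and a compactness-based patching. Should the direct perturbation route prove unwieldy, a Lagrangian alternative is available: under MFCQ the set of multipliers at \((\bar\varnlp, \bar\parameter)\) is a non-empty bounded polytope, and a Clarke-subdifferential bound expresses the Lipschitz modulus of \(\slnmapval\) in terms of the multiplier norm and the \(\parameter\)-sensitivity of the Lagrangian, bypassing explicit optimizer selection altogether.
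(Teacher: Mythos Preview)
The paper does not prove this proposition; it is quoted from \cite[Theorem~4.2]{ref:FiaIsh-90} and invoked off the shelf in Step~3 of the proof of Theorem~\ref{p:main result}. There is no argument in the paper to compare your proposal against.

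For what it is worth, your outline is the standard one and the upper-calmness construction via the MFCQ direction is correct. You have also put your finger on the genuine obstacle in the lower-calmness step: the direction \(v\) is hypothesised only at the single pair \((\bar\varnlp,\bar\parameter)\), yet your perturbation \(\tilde\varnlp = \varnlp_\parameter + \rho'\,\|\parameter-\bar\parameter\|\,v\) needs an inward-pointing direction at the accumulation points of \(\{\varnlp_\parameter\}\), which by Berge lie anywhere in \(\slnmapopt(\bar\parameter)\), not just at \(\bar\varnlp\). Your finite-covering patch silently upgrades the hypothesis to MFCQ across the whole solution set. The Lagrangian route you list last --- MFCQ forces the KKT multiplier set to be nonempty and bounded, whence the Clarke generalized gradient of \(\slnmapval\) is contained in the \(\parameter\)-gradients of the Lagrangian over that bounded set --- is in fact the line taken in the Gauvin--Dubeau/Fiacco--Ishizuka literature and sidesteps the optimizer-selection issue you flag; it should be the primary argument rather than a fallback.
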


\section{Convergence Analysis}\label{app-sec:convergence}

In this section, we provide a brief analysis of the convergence properties of the \gradOL{}. We rely on the framework of randomized smoothing and generalized Goldstein stationarity. Below, we cite the relevant definitions and auxiliary results from \cite{liu2024zeroth}.

\subsection{Definitions and Notations}
\textbf{Notation.} Let $\mathcal{X} \subset \mathbb{R}^{n+1}$ be a convex and compact set with diameter bounded by $B$. As before, we denote the Euclidean norm by $\|\cdot\|$.

Note that the outer-level objective is to maximize \(\G(\U)\) (see \eqref{e:innerG cheby}) with respect to the uncertainty variable \(\U\). To retain generality while simplifying notation, we introduce a slight abuse of notation and define \(F\coloneqq -\G\). We also relabel the uncertainty variable \(\U\) as \(x\) preserving the structure of the original problem while only modifying symbols. Since \gradOL{} seeks to maximize \(\G(\U)\) over \(\U\), this transformation yields the equivalent minimization problem
\[
    \underset{x \in \mathcal{X}}{\min} F(x).
\]
% Note that the primary objective is to maximize the outer function \(\G(\U)\) with respect to the uncertainty variable \(\U\) in \eqref{e:innerG cheby}. In order to keep the generality of results, with a slight abuse of notation, we define \(F\) as \(-\G\). The uncertainty variable \(\U\) is denoted by \(x\), preserving the structure of the original problem while only changing the notation. Since \gradOL{} maximizes \(\G(\U)\) with respect to \(\U\), this reformulation yields an equivalent minimization problem \(\underset{x \in \mathcal{X}}{\min} F(x)\).

\textbf{Smoothed Objective.}
Consider the problem $\underset{x \in \mathcal{X}}{\min }  F(x)$, where $F$ is locally Lipschitz as established in Theorem~\ref{p:main result} but potentially nonconvex and nonsmooth. We utilize the $\delta$-smoothed approximation:
\[
F_\delta(x) \;:=\; \mathbb{E}_{u \sim \mathrm{Unif}(B(0,1))}\big[ F(x + \delta u) \big].
\]
As established in \cite{liu2024zeroth}, $F_\delta$ is differentiable with a Lipschitz continuous gradient. Specifically, if $F$ is $G$-Lipschitz, then $F_\delta$ is $L_\delta$-smooth with constant $L_\delta = \frac{c G \sqrt{n+1}}{\delta}$ for some dimension-dependent constant $c$.

\textbf{Generalized Gradient Mapping.}
For a parameter $\gamma > 0$, a point $x \in \mathcal{X}$, and a gradient vector $v \in \mathbb{R}^{n+1}$, the generalized gradient mapping is defined as:
\[
\mathbb{G}(x, v, \gamma) \;:=\; \frac{1}{\gamma}\left( x - \underset{y \in \mathcal{X}}{\arg\min} \left\{ \langle v, y \rangle + \frac{1}{2\gamma}\|y - x\|^2 \right\} \right).
\]
This mapping serves as a proxy for stationarity in constrained optimization.

\subsection{Preliminaries and Assumptions}

\begin{lemma}[Gradient Bound]
\label{lem:gradbound}
If $h: \mathbb{R}^{n+1} \to \mathbb{R}$ is $L$-Lipschitz, then $\|\nabla h(x)\| \le L$ wherever the gradient exists.
\end{lemma}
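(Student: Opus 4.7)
The plan is to derive the gradient bound directly from the definition of the Fréchet derivative, reducing $\|\nabla h(x)\|$ to a supremum of directional derivatives that are in turn controlled by the Lipschitz constant. First I would fix a point $x \in \mathbb{R}^{n+1}$ at which $\nabla h(x)$ exists, and let $v \in \mathbb{R}^{n+1}$ be an arbitrary unit vector.

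By differentiability at $x$, the directional derivative in the direction $v$ equals
\[
\langle \nabla h(x), v \rangle \;=\; \lim_{t \to 0}\frac{h(x+tv)-h(x)}{t}.
\]
The $L$-Lipschitz hypothesis applied to the numerator yields $|h(x+tv)-h(x)| \le L\|tv\| = L|t|$, so the difference quotient is bounded in absolute value by $L$ for every $t \neq 0$. Passing to the limit $t \to 0$ preserves this bound and gives $|\langle \nabla h(x), v \rangle| \le L$ for every unit $v$.

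To conclude, I would invoke the variational characterization of the Euclidean norm, namely $\|\nabla h(x)\| = \sup_{\|v\|=1} \langle \nabla h(x), v \rangle$. Combined with the pointwise bound above (or, equivalently, by plugging in the unit vector $v = \nabla h(x)/\|\nabla h(x)\|$ when the gradient is nonzero, and noting that the claim is trivial otherwise), this immediately produces $\|\nabla h(x)\| \le L$, as required. There is no substantive obstacle in this argument; the only subtlety worth flagging explicitly is that the hypothesis ``wherever the gradient exists'' is precisely what legitimizes passing to the limit of the difference quotient in the step above. The result is dimension-free, and the same argument would carry over verbatim if the Euclidean norm were replaced by any norm whose dual-pairing inequality is available.
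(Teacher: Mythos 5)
Your proof is correct; the paper states this lemma without proof (treating it as a standard property invoked for the Lipschitz function $F_\delta$), and your argument---bounding the directional derivative $\langle \nabla h(x), v\rangle$ by the Lipschitz difference quotient and then taking the supremum over unit vectors---is exactly the canonical argument one would supply.
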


\begin{assumption}[Inexact Gradient]
\label{asm:inexact}
At any query point $x_R$, the algorithm receives an inexact gradient estimator $v_R$ satisfying:
\[
\|v_R - \nabla F_\delta(x_R)\| \;\le\; e.
\]
\end{assumption}

\begin{assumption}[Lipschitz Continuity]
The function $F_\delta$ is $G$-Lipschitz on $\mathcal{X}$. Consequently, by standard properties (Lemma \ref{lem:gradbound}), $\|\nabla F_\delta(x)\| \le G$ for all $x \in \mathcal{X}$.
\end{assumption}

\subsection{Main Convergence Result}
We analyze the output $F(x_R)$, where $R \text{ is drawn uniformly from} \{0,\dots,T-1\}$, of \gradOL{} and $T$ is the number of iterations.
\begin{theorem}
Let step size $\gamma = \frac{\delta}{c G \sqrt{n+1}}$. Under the stated assumptions, if the number of iterations $T$ satisfies
\[
T \;\ge\; \frac{4G^2 B c \sqrt{n+1}}{\delta(\epsilon^2 - 4G^2 - 8e^2)},
\]
assuming the denominator is positive, then the iterates satisfy the expected stationarity bound $\mathbb{E}[\|\mathbb{G}(x_R, \nabla F_\delta(x_R), \gamma)\|] \le \epsilon$.
\end{theorem}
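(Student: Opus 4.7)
The plan is to treat the outer loop of \gradOL{} as projected gradient descent on the randomized smoothing $F_\delta$ of $F \coloneqq -\G$, and chain together three standard ingredients: the descent lemma for $L_\delta$-smooth functions, the three-point (projection) inequality for the gradient mapping, and a Jensen/non-expansiveness step to pass from the noisy gradient mapping to the clean $\mathbb{G}(x_R,\nabla F_\delta(x_R),\gamma)$ appearing in the thesis. Since Theorem~\ref{p:main result} guarantees $F$ is $G$-Lipschitz on $\admuncer^{\dimsp+1}$, the smoothing result cited from \cite{liu2024zeroth} renders $F_\delta$ differentiable with constant $L_\delta = cG\sqrt{n+1}/\delta$, so the chosen step size $\gamma = \delta/(cG\sqrt{n+1})$ is exactly $1/L_\delta$, the canonical pairing that collapses higher-order terms to clean constants.

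The first block of work is a per-step descent inequality. I would start from $F_\delta(x_{k+1}) \le F_\delta(x_k) + \langle \nabla F_\delta(x_k),\,x_{k+1}-x_k\rangle + \tfrac{L_\delta}{2}\|x_{k+1}-x_k\|^2$, use the projected-gradient identity $x_{k+1}-x_k = -\gamma\,\mathbb{G}(x_k,v_k,\gamma)$, and apply the three-point inequality $\langle v_k, x_{k+1}-x_k\rangle \le -\gamma\|\mathbb{G}(x_k,v_k,\gamma)\|^2$. Splitting $\langle \nabla F_\delta(x_k),\cdot\rangle = \langle v_k,\cdot\rangle + \langle \nabla F_\delta(x_k) - v_k,\cdot\rangle$, bounding the second inner product by $\gamma e\,\|\mathbb{G}(x_k,v_k,\gamma)\|$ via Assumption~\ref{asm:inexact}, and applying Young's inequality to the cross term would yield a per-step descent of the form $F_\delta(x_{k+1}) \le F_\delta(x_k) - \tfrac{\gamma}{4}\|\mathbb{G}(x_k,v_k,\gamma)\|^2 + \gamma e^2$.

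Telescoping over $k=0,\ldots,T-1$ and using the Lipschitz bound $F_\delta(x_0) - \min_{\mathcal{X}} F_\delta \le GB$ (diameter $B$ times Lipschitz constant $G$), then averaging with $R$ uniform over $\{0,\ldots,T-1\}$, gives $\mathbb{E}\|\mathbb{G}(x_R,v_R,\gamma)\|^2 \le 4GB/(\gamma T) + 4e^2$. Substituting $1/\gamma = cG\sqrt{n+1}/\delta$ produces the leading term $4G^2Bc\sqrt{n+1}/(\delta T)$, which is precisely the numerator of the iteration count in the theorem statement.

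The main obstacle, and the source of the subtractive $4G^2 + 8e^2$ in the denominator, is converting the bound on the noisy gradient mapping into one on the exact mapping $\mathbb{G}(x_R,\nabla F_\delta(x_R),\gamma)$. For this I would invoke non-expansiveness of the Moreau projector, namely $\|\mathbb{G}(x,v,\gamma) - \mathbb{G}(x,v',\gamma)\| \le \|v - v'\| \le e$, combined with $(a+b)^2 \le 2a^2 + 2b^2$, the crude a-priori bound $\|\mathbb{G}_k\| \le G$ from Lemma~\ref{lem:gradbound}, and Jensen's inequality to pass from $\mathbb{E}\|\cdot\|^2$ to $(\mathbb{E}\|\cdot\|)^2$. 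Setting the resulting composite bound $\le \epsilon^2$ and algebraically inverting for $T$ then yields the stated threshold; the hypothesis that $\epsilon^2 - 4G^2 - 8e^2 > 0$ is exactly what makes the inversion meaningful, since otherwise the $1/T$ term can never dominate the constant variance floor arising from the smoothing and gradient-noise conversions.
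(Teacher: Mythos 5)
Your architecture is the same as the paper's: descent lemma for the $L_\delta$-smooth surrogate $F_\delta$, the gradient-mapping inequality $-\langle v_t,\mathbb{G}(x_t,v_t,\gamma)\rangle\le-\|\mathbb{G}(x_t,v_t,\gamma)\|^2$, telescoping with $F_\delta(x_0)-F_\delta(x_T)\le GB$, and a final conversion from the noisy mapping $\mathbb{G}(x_R,v_R,\gamma)$ to the clean one via non-expansiveness of the projection and $(a+b)^2\le 2a^2+2b^2$. The point of divergence is how the inexact-gradient cross term is absorbed, and this is exactly where the theorem's constants come from. The paper splits $\langle v_t-\nabla F_\delta(x_t),\mathbb{G}(x_t,v_t,\gamma)\rangle$ so that a term $\tfrac{1}{2}\|\mathbb{G}(x_t,\nabla F_\delta(x_t),\gamma)\|^2$ survives on the right-hand side of the per-step inequality; that term is later bounded by $G^2/2$ via Lemma A.1, and this is the sole source of the $4G^2$ sitting in the denominator of the iteration count. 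You instead bound the cross term by $\gamma e\|\mathbb{G}(x_t,v_t,\gamma)\|$ and absorb it by Young's inequality into the $-\tfrac{\gamma}{2}\|\mathbb{G}(x_t,v_t,\gamma)\|^2$ decrease, arriving at $\mathbb{E}\|\mathbb{G}(x_R,v_R,\gamma)\|^2\le 4GB/(\gamma T)+4e^2$.

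That per-step computation is correct, but it does not "yield the stated threshold" as you claim. Chaining your telescoped bound through the noisy-to-clean conversion gives $\mathbb{E}\|\mathbb{G}(x_R,\nabla F_\delta(x_R),\gamma)\|^2\le 2e^2+2\bigl(4GB/(\gamma T)+4e^2\bigr)=8GB/(\gamma T)+10e^2$, hence a sufficient condition $T\ge 8GB/\bigl(\gamma(\epsilon^2-10e^2)\bigr)$ --- a different threshold, with no $4G^2$ term and a doubled leading constant, and neither condition implies the other across all parameter regimes. Your invocation of the a-priori bound $\|\mathbb{G}\|\le G$ in the last step is vestigial: in your route it never actually enters, so the $4G^2$ cannot materialize from the steps you describe. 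To recover the theorem exactly as stated you must follow the paper's decomposition, keeping $\|\mathbb{G}(x_t,\nabla F_\delta(x_t),\gamma)\|^2$ on the right-hand side and only then bounding it by $G^2$; alternatively, your Young's-absorption route proves a clean (and arguably preferable) variant of the statement, but it is not a proof of this one with these constants. The remaining ingredients (non-expansiveness of the prox/projection, Jensen to pass from the second moment to the first) are used correctly and match the paper's implicit usage.
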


\begin{proof}
\textbf{Step 1: Descent Lemma with Inexact Gradients.}
Since $F_\delta$ is $L_\delta$-smooth with $L_\delta = \frac{c G \sqrt{n+1}}{\delta}$, the standard descent lemma implies:
\begin{align*}
F_\delta(x_{t+1}) \;&\le\; F_\delta(x_t) + \langle \nabla F_\delta(x_t), x_{t+1} - x_t \rangle + \frac{L_\delta}{2} \|x_{t+1} - x_t\|^2 \\
&=\; F_\delta(x_t) - \gamma \langle \nabla F_\delta(x_t), \mathbb{G}(x_t, v_t, \gamma) \rangle + \frac{L_\delta \gamma^2}{2} \|\mathbb{G}(x_t, v_t, \gamma)\|^2.
\end{align*}
Substituting $v_t$ into the inner product:
\[
-\langle \nabla F_\delta(x_t), \mathbb{G}(x_t, v_t, \gamma) \rangle \;=\; -\langle v_t, \mathbb{G}(x_t, v_t, \gamma) \rangle + \langle v_t - \nabla F_\delta(x_t), \mathbb{G}(x_t, v_t, \gamma) \rangle.
\]
Using the property of the gradient mapping that $-\langle v_t, \mathbb{G}(x_t, v_t, \gamma) \rangle \le -\|\mathbb{G}(x_t, v_t, \gamma)\|^2$ (Lemma C.2 \cite{liu2024zeroth}), we have:
\begin{align*}
F_\delta(x_{t+1}) \;&\le\; F_\delta(x_t) - \left(\gamma - \frac{L_\delta \gamma^2}{2}\right) \|\mathbb{G}(x_t, v_t, \gamma)\|^2 + \gamma \langle v_t - \nabla F_\delta(x_t), \mathbb{G}(x_t, v_t, \gamma) \rangle.
\end{align*}

\textbf{Step 2: Error Decomposition.}
We bound the inner product term using Cauchy-Schwarz and Young's inequality :
\begin{align*}
\langle v_t - \nabla F_\delta(x_t), \mathbb{G}(x_t, v_t, \gamma) \rangle
&= \langle v_t - \nabla F_\delta(x_t), \mathbb{G}(x_t, \nabla F_\delta(x_t), \gamma) \rangle \\
&\qquad + \lVert v_t - \nabla F_\delta(x_t) \rVert 
   \lVert \mathbb{G}(x_t, v_t, \gamma) - \mathbb{G}(x_t, \nabla F_\delta(x_t), \gamma) \rVert \\
&\le \langle v_t - \nabla F_\delta(x_t), \mathbb{G}(x_t, \nabla F_\delta(x_t), \gamma) \rangle + \lVert v_t - \nabla F_\delta(x_t) \rVert^2 \\
\end{align*}
Using Cauchy–Schwarz and Young’s inequality on the RHS above 
\begin{align*}
\langle v_t - \nabla F_\delta(x_t), \mathbb{G}(x_t, v_t, \gamma) \rangle\le \frac{3}{2}\|v_t - \nabla F_\delta(x_t)\|^2 
   + \frac{1}{2}\|\mathbb{G}(x_t, \nabla F_\delta(x_t), \gamma)\|^2.
\end{align*}

Taking expectations and applying Assumption \ref{asm:inexact}:
\[
\left(\gamma - \frac{L_\delta \gamma^2}{2}\right) \mathbb{E}[\|\mathbb{G}(x_t, v_t, \gamma)\|^2] \;\le\; \mathbb{E}[F_\delta(x_t) - F_\delta(x_{t+1})] + \frac{3\gamma}{2}e^2 + \frac{\gamma}{2} \mathbb{E}[\|\mathbb{G}(x_t, \nabla F_\delta(x_t), \gamma)\|^2].
\]

\textbf{Step 3: Telescoping Sum.}
Summing from $t=0$ to $T-1$, dividing by $T$, and using $\gamma = 1/L_\delta$ (specifically $\gamma = \frac{\delta}{c G \sqrt{n+1}}$) such that the coefficient on the LHS becomes $\gamma/2$:
\[
\frac{\gamma}{2} \mathbb{E}[\|\mathbb{G}(x_R, v_R, \gamma)\|^2] \;\le\; \frac{F_\delta(x_0) - F_\delta(x_T)}{T} + \frac{3\gamma}{2}e^2 + \frac{\gamma}{2} \mathbb{E}[\|\mathbb{G}(x_R, \nabla F_\delta(x_R), \gamma)\|^2].
\]
Rearranging and bounding $F_\delta(x_0) - F_\delta(x_T) \le G B$:
\begin{equation}
\label{eq:approx_bound}
\mathbb{E}[\|\mathbb{G}(x_R, v_R, \gamma)\|^2] \;\le\; \frac{2GB}{T\gamma} + 3e^2 + \mathbb{E}[\|\mathbb{G}(x_R, \nabla F_\delta(x_R), \gamma)\|^2].
\end{equation}

\textbf{Step 4: Final Bound.}
We bound the true gradient mapping term on the RHS using the Lipschitz constant $G$ (using $\|\mathbb{G}(x, \nabla F, \gamma)\| \le \|\nabla F\| \le G$ using Lemma \ref{lem:gradbound}). Substituting this into \eqref{eq:approx_bound}:
\[
\mathbb{E}[\|\mathbb{G}(x_R, v_R, \gamma)\|^2] \;\le\; \frac{2GB}{T\gamma} + 3e^2 + G^2.
\]
Finally, we relate the true mapping back to the approximate mapping:
\begin{align*}
\mathbb{E}[\|\mathbb{G}(x_R, \nabla F_\delta(x_R), \gamma)\|^2] \;&\le\; 2\mathbb{E}[\|v_R - \nabla F_\delta(x_R)\|^2] + 2\mathbb{E}[\|\mathbb{G}(x_R, v_R, \gamma)\|^2] \\
&\le\; 2e^2 + 2\left( \frac{2GB}{T\gamma} + 3e^2 + G^2 \right) \\
&=\; \frac{4GB}{T\gamma} + 4G^2 + 8e^2.
\end{align*}

We obtain the sufficient condition for the squared norm to be bounded by $\epsilon^2$:
\[
\frac{4GB}{T\gamma} + 4G^2 + 8e^2 \le \epsilon^2 \implies T \ge \frac{4GB}{\gamma(\epsilon^2 - 4G^2 - 8e^2)}.
\]
Substituting $\gamma = \frac{\delta}{c G \sqrt{n+1}}$ yields the stated result.
\end{proof}

\section{Iterative Sampling Routine}
\label{SO}
By sampling \(N\) many \(\uncervar\)(s)  from \(\admuncer\) we replace the infinite constraint in \eqref{e:csip smooth cheby} by maximization over \(N\) constrained minimization problems, solve the resulting convex program to get \(\slack^j\), and record \(f_N^j\). Repeating \(M\) times yields an empirical distribution of \(f_N\); we accept \(N\) once the fraction of trials within \(\delta\) of the true optimum \(f^*\) reaches 80\%. 
\begin{algorithm}[ht]
  \caption{Iterative Sampling for \csip{} \eqref{e:csip smooth cheby}}
  \begin{algorithmic}[1]
    \REQUIRE $N,M\in\mathbb{N}$, tolerance $\delta>0$, true value $f^*$
    \FOR{$j=1,\dots,M$}
      \STATE Draw i.i.d.\ samples $\{\uncervar^i\}_{i=1}^N\subset\admuncer$
      \STATE Solve
      \[
         f_N^j \;=\; \min_{(\slack,\decvar)\in\mathbb{R}\times\admdec}
           \slack
          \quad\text{s.t.}\quad
            \|\uncervar^i - \decvar\|^2 - \slack \le 0
            \;\forall\,i=1,\dots,N
      \]
    \ENDFOR
    \STATE Compute empirical confidence
    \[
      \hat p_N \;=\;\frac{1}{M}\sum_{j=1}^M
        \mathbf{1}\bigl(|f_N^j - f^*|\le\delta\bigr).
    \]
    \IF{$\hat p_N \ge 0.8$}
      \STATE $N$ is sufficient
    \ELSE
      \STATE Increase $N$ and repeat
    \ENDIF
  \end{algorithmic}
\end{algorithm}

\section{Hyperparameters}\label{s:hyp}
Hyperparameters for \gradOL{} are chosen after experimenting with the benchmark problems. We observe that runtimes are somewhat robust to some of the given parameters, which can be chosen as per the problem at hand.

\begin{table}[h]
  \centering
  \caption{Hyperparameters for \gradOL{}}
  \label{tab:hyperparam}
  \small
  \rowcolors{2}{gray!10}{white}
  \begin{tabular}{l l}
    \toprule
    \textbf{Category} & \textbf{Value} \\
    \midrule
    Learning rate ($\eta$) & $1 \times 10^{-1}$ \\
    Tolerance    ($\delta$)   &  $1 \times 10^{-3}$\\
    Max Epochs   ($\texttt{M}$) & $1 \times 10^{4}$ \\
    Barrier Parameter ($\alpha$)  & $1 \times 10^{5}$ \\
    
    \bottomrule
  \end{tabular}
  \vspace{-1em}
\end{table}

% For MSA-Simulated Annealing, we use Ipopt Optimizer \cite{WachterBiegler2006} for the inner minimization. 
% \begin{table}[h]
%   \centering
%   \caption{Hyperparameters for MSA-Simulated Annealing}
%   \label{tab:hyperparam}
%   \small
%   \rowcolors{2}{gray!10}{white}
%   \begin{tabular}{l l}
%     \toprule
%     \textbf{Category} & \textbf{Value} \\
%     \midrule
%     Maximum iterations & $1 \times 10^{4}$\\
%     Initial temperature & $1.0$ \\
%     Cooling rate & $9.95 \times 10^{-1}$  \\   
%     \bottomrule
%   \end{tabular}
%   \vspace{-1em}
% \end{table}

% As described in the routine we adapt for Iterative Sampling \ref{SO} below are the hyperparameters 
% \begin{table}[h]
%   \centering
%   \caption{Hyperparameters for Iterative Sampling}
%   \label{tab:hyperparam}
%   \small
%   \rowcolors{2}{gray!10}{white}
%   \begin{tabular}{l l}
%     \toprule
%     \textbf{Category} & \textbf{Value} \\
%     \midrule
%     Maximum iterations ($M$) & $1 \times 10^{2}$\\
%     Number of Random Samples ($N$) & $10$ - $1 \times 10^{4}$ \\
%     Confidence Threshold & $8 \times 10^{-1}$  \\   
%     \bottomrule
%   \end{tabular}
%   \vspace{-1em}
% \end{table}
For MSA-Simulated Annealing, we use Ipopt Optimizer \cite{WachterBiegler2006} for the inner minimization, the hyperparameters are given in \ref{tab:hyperparam_msa}. As described in the routine we adapt for Iterative Sampling in \ref{SO} the hyperparameters are given in \ref{tab:hyperparam_iter}.
\begin{table}[ht]
  \centering
  \begin{minipage}[t]{0.48\linewidth}
    \centering
    \caption{MSA–Simulated Annealing}
    \label{tab:hyperparam_msa}
    \small
    \rowcolors{2}{gray!10}{white}
    \begin{tabular}{l l}
      \toprule
      \textbf{Category} & \textbf{Value} \\
      \midrule
      Maximum iterations & $1 \times 10^{4}$\\
      Initial temperature & $1.0$ \\
      Cooling rate & $9.95 \times 10^{-1}$  \\   
      \bottomrule
    \end{tabular}
  \end{minipage}\hfill
  \begin{minipage}[t]{0.48\linewidth}
    \centering
    \caption{Iterative Sampling}
    \label{tab:hyperparam_iter}
    \small
    \rowcolors{2}{gray!10}{white}
    \begin{tabular}{l l}
      \toprule
      \textbf{Category} & \textbf{Value} \\
      \midrule
      Maximum iterations ($M$) & $1 \times 10^{2}$\\
      Number of Random Samples ($N$) & $10$ – $1 \times 10^{4}$ \\
      Confidence Threshold & $8 \times 10^{-1}$  \\
      Tolerance    ($\delta$)   &  $1 \times 10^{-3}$\\
      \bottomrule
    \end{tabular}
  \end{minipage}
  % \vspace{1em}
  % \caption{Hyperparameters for MSA–Simulated Annealing and Iterative Sampling}
  \label{tab:hyperparams_sidebyside}
\end{table}

\section{Ablation Study}\label{app-sec:ablation}
We report separate ablation tables for five representative Chebyshev center problems, examining variations in learning rates and barrier parameters. The comparisons in Table~\ref{tab:leon1} highlight the sensitivity of \gradOL{} to these hyperparameters. Notably, \gradOL{} relies on only two scalar hyperparameters, each offering an intuitive sense of how the algorithm’s behavior changes as the parameters are adjusted. In particular, reducing the learning rate from its default setting ($\eta = 10^{-1}$)typically leads to slower convergence. Conversely, while larger values of $\eta$ may accelerate convergence, they often induce instability in the form of oscillatory or \emph{zig-zag} optimization trajectories.

On the other hand, the choice of the barrier parameter 
$\alpha$ directly affects the accuracy of the gradient estimates required for the outer maximization of $\G$. In particular, $\alpha$ must be sufficiently large to ensure that the inner minimization is solved with high fidelity, thereby yielding reliable gradient information for the outer problem. This effect is also reflected in our ablation study.

Our ablation study considers five benchmark representation problems: \texttt{leon9}, \texttt{kortanek1}, \texttt{leon10}, \texttt{reemtsen3}, and \texttt{hettich2}. Unless otherwise specified, we use the default parameter setting $(\eta,\alpha) = (0.1, 10^5)$. We then conduct a simple grid search over learning rates $0.01, 0.1, 0.5$ and penalty parameters $10^4, 10^5, 10^6$.

As shown in Table~\ref{tab:leon1}, the results corroborate the expectation that larger values of $\alpha$ yield more accurate gradient estimates, while smaller learning rates improve convergence at the cost of increased runtime. The problems exhibit stronger sensitivity to $\alpha$, indicating that the ability to compute accurate gradients (controlled by $\alpha$) largely determines whether the optimization succeeds. In contrast, the adverse effects of reducing the learning rate can, in principle, be offset by longer runtimes. However, when $\alpha$ is insufficiently small, the optimizer may fail to identify the correct solution and can miss the optimum entirely.

% --- Main container for all sub-tables ---
\begin{table}[h!]
  \centering
  % An optional main caption for the whole group of tables
  \caption{(Obtained Value / Runtime) for different Chebyshev Center problems}
  \label{tab:all_hyperparams}
  
  % --- ROW 1 ---
  \begin{minipage}{0.48\textwidth}
    \centering
    {\texttt{leon9}}
    \label{tab:leon1} % UNIQUE LABEL
    \small
    \rowcolors{2}{gray!10}{white}
    \begin{tabular}{l|ccc} % CHANGED: cccc -> ccc
      \toprule
      \rowcolor{gray!20}
      \diagbox{$\alpha$}{$\eta$} & $10^{-2}$ & $10^{-1}$ & $5 \times 10^{-1}$ \\
      \midrule
      $10^{4}$ & \makecell{$0.1623$ \\ $4.259 \times 10^{2}$ ms} & \makecell{$0.1644$ \\ $56.573$ ms} & \makecell{$1 \times 10^{-6}$ \\ $8.32 \times 10^{2}$ ms} \\  
      \midrule
      $10^{5}$ & \makecell{$0.1421$ \\ $55.837$ ms} & \makecell{$0.16338$ \\ $8.694 $ms} & \makecell{$0.1421$ \\ $8.466 \times 10^{2}$ ms} \\  
      \midrule
      $10^{6}$ & \makecell{$0.0856$ \\ $842.124$ ms} & \makecell{$0.0849$ \\ $55.451$ ms} & \makecell{$0.1128$ \\ $54.495$ ms} \\  
      \bottomrule
    \end{tabular}
    {\\  Optimal Value = $0.16338$}
  \end{minipage}
  \hfill % Pushes the two minipages apart
  \begin{minipage}{0.48\textwidth}
    \centering
    {\texttt{kortanek1}}
    \label{tab:kortanek1} % UNIQUE LABEL
    \small
    \rowcolors{2}{gray!10}{white}
    \begin{tabular}{l|ccc} % CHANGED: cccc -> ccc
      \toprule
      \rowcolor{gray!20}
  
      \diagbox{$\alpha$}{$\eta$} & $10^{-2}$ & $10^{-1}$ & $5 \times 10^{-1}$ \\
      \midrule
      $10^{4}$ & \makecell{$3.2175$ \\ $2.028$ ms} & \makecell{$3.1198$ \\ $2.528$ ms} & \makecell{$3.1198$ \\ $2.7552$ ms} \\  
      \midrule
      $10^{5}$ & \makecell{$2.9032$ \\ $1.782$ ms} & \makecell{$3.2184$ \\ $6.018$ ms} & \makecell{$3.0004$ \\ $7.011$ ms} \\  
      \midrule
      $10^{6}$ & \makecell{$2.8152$ \\ $1.784$ ms} & \makecell{$3.2211$ \\ $1.819$ ms} & \makecell{$3.2052$ \\ $1.802$ ms} \\  
      \bottomrule
    \end{tabular}
      {\\ Optimal Value = $3.2212$}
  \end{minipage}

  \vspace{1.5em} % Add some vertical space between rows

  % --- ROW 2 ---
  \begin{minipage}{0.48\textwidth}
    \centering
    {\texttt{leon10}}
    \label{tab:goerner4} % UNIQUE LABEL
    \small
    \rowcolors{2}{gray!10}{white}
    \begin{tabular}{l|ccc} % CHANGED: cccc -> ccc
      \toprule
      \rowcolor{gray!20}
      % CHANGED: Removed 10^1 column
      \diagbox{$\alpha$}{$\eta$} & $10^{-2}$ & $10^{-1}$ & $5 \times 10^{-1}$ \\
      \midrule
      $10^{4}$ & \makecell{ $1 \times 10^{-6}$ \\ $6.926$ ms} & \makecell{$1 \times 10^{-6}$ \\ $6.051$ ms} & \makecell{$1 \times 10^{-6}$ \\ $6.104$ ms} \\  
      \midrule
      $10^{5}$ & \makecell{$0.4778$ \\ $20.704$ ms} & \makecell{$0.5377$ \\ $19.612$ ms} & \makecell{$1 \times 10^{-6}$ \\ $6.166$ ms} \\  
      \midrule
      $10^{6}$ & \makecell{ $0.5372$ \\ $143.222$ ms} & \makecell{$1 \times 10^{-6}$ \\ $11.366$ ms} & \makecell{$1 \times 10^{-6}$ \\ $6.439$ ms} \\  
      \bottomrule
    \end{tabular}
      {\\ Optimal Value = $ 0.5382$}
  \end{minipage}
  \hfill % Pushes the two minipages apart
  \begin{minipage}{0.48\textwidth}
    \centering
    {\texttt{reemtsen3}}
    \label{tab:reemtsen1} % UNIQUE LABEL
    \small
    \rowcolors{2}{gray!10}{white}
    \begin{tabular}{l|ccc} % CHANGED: cccc -> ccc
      \toprule
      \rowcolor{gray!20}
      % CHANGED: Removed 10^1 column
      \diagbox{$\alpha$}{$\eta$} & $10^{-2}$ & $10^{-1}$ & $5 \times 10^{-1}$ \\
      \midrule
      $10^{4}$ & \makecell{$0.7125$ \\ $4.46 \times 10^{4}$ ms} & \makecell{$1 \times 10^{-6}$ \\ $3.87\times 10^{2}$ ms} & \makecell{$1 \times 10^{-6} $ \\ $40.726$ ms} \\  
      \midrule
      $10^{5}$ & \makecell{$0.7352$ \\ $1.16\times 10^{3}$ ms} & \makecell{$0.7354$ \\ $15.484$ ms} & \makecell{$1 \times 10^{-6}$ \\ $3.89 \times 10^{2} $ ms} \\  
      \midrule
      $10^{6}$ & \makecell{$0.7351$ \\ $1.34 \times 10^{4}$ ms} & \makecell{$0.7352$ \\ $1.307 \times 10^{3}$ ms} & \makecell{$1 \times 10^{-6}$ \\ $52.71 \times 10^{3}$ ms} \\  
      \bottomrule
    \end{tabular}
      {\\  Optimal Value = $0.7354$}
  \end{minipage}

  \vspace{1.5em} % Add some vertical space between rows

  % --- ROW 3 (with one centered table) ---
  \begin{minipage}{0.48\textwidth}
    \centering
    {\texttt{hettich2}}
    \label{tab:hettich1} % UNIQUE LABEL
    \small
    \rowcolors{2}{gray!10}{white}
    \begin{tabular}{l|ccc} % CHANGED: cccc -> ccc
      \toprule
      \rowcolor{gray!20}
      % CHANGED: Removed 10^1 column
      \diagbox{$\alpha$}{$\eta$} & $10^{-2}$ & $10^{-1}$ & $5 \times 10^{-1}$ \\
      \midrule
      $10^{4}$ & \makecell{$0.48513$\\ $2.544$ ms} & \makecell{$0.47269$ \\ $2.017$ ms} & \makecell{$0.0001$ \\ $2.174$ ms} \\  
      \midrule
      $10^{5}$ & \makecell{$0.5355$ \\ $7.826$ ms} & \makecell{$0.53742 $ \\ $ 3.98$ ms} & \makecell{$0.5372$ \\ $67.937$ ms} \\  
      \midrule
      $10^{6}$ & \makecell{$0.5307$ \\ $3.702 \times 10^{4}$ ms} & \makecell{$0.5378$ \\ $1.4  \times 10^{4}$ ms} & \makecell{$0.53834$ \\ $8.579  \times 10^{3}$ ms} \\  
      \bottomrule
    \end{tabular}
      {\\ Optimal Value = $0.538$}
  \end{minipage}
\end{table}

\end{document}